\documentclass[11pt]{amsart}
\usepackage{mathrsfs}
\usepackage{color,hyperref}

\input xy
\xyoption{all}

\textwidth=480pt
\textheight=680pt
\topmargin=-30pt
\evensidemargin=-10pt
\oddsidemargin=-10pt

\newcommand{\Fin}{\mathcal{F}\kern-1pt\mathit{in}}
\newcommand{\w}{\omega}

\newcommand{\C}{\mathcal C}
\newcommand{\Ra}{\Rightarrow}
\newcommand{\IZ}{\mathbb Z}
\newcommand{\id}{\mathrm{id}}

\newcommand{\TS}{\mathsf{T\!S}}
\newcommand{\TsL}{\mathsf{T\!sL}}
\newcommand{\TsLw}{\mathsf{T\!sL_w}}
\newcommand{\TsLo}{\mathsf{T\!sL_o}}
\newcommand{\TL}{\mathsf{TL}}

\newcommand{\eC}{\mathsf{e}{:}\mathcal C}
\newcommand{\hC}{\mathsf{h}{:}\mathcal C}

\newcommand{\eTS}{\mathsf{e}{:}\!\mathsf{T\!S}}
\newcommand{\hTS}{\mathsf{h}{:}\!\mathsf{T\!S}}

\newtheorem{theorem}{Theorem}[section]
\newtheorem{mainth}[theorem]{Main Theorem}
\newtheorem{lemma}[theorem]{Lemma}
\newtheorem{corollary}[theorem]{Corollary}
\newtheorem{example}[theorem]{Example}

\newtheorem{proposition}[theorem]{Proposition}

\theoremstyle{definition}
\newtheorem{definition}[theorem]{Definition}

\newtheorem{problem}[theorem]{Problem}

\title[Completeness and absolute $H$-closedness of topological semilattices]{Completeness and absolute $H$-closedness\\ of topological semilattices}
\author{Taras Banakh and Serhii Bardyla}
\address{T.~Banakh: Ivan Franko National University of Lviv (Ukraine) and\newline\indent Jan Kochanowski University in Kielce (Poland)}
\email{t.o.banakh@gmail.com}
\address{S.~Bardyla: Ivan Franko National University of Lviv (Ukraine)}
\email{sbardyla@yahoo.com}
\subjclass{22A26; 54D30; 54D35; 54H12}
\keywords{topological semilattice, maximal chain, $H$-closed topological semigroup, absolutely $H$-closed topological semigroup}

\begin{document}

\begin{abstract} We find (completeness type) conditions on topological semilattices $X,Y$ guaranteeing that each continuous homomorphism $h:X\to Y$ has closed image $h(X)$ in $Y$.
 \end{abstract}
\maketitle
\section{Introduction} It is well-known that a topological group $X$ is complete in its two-sided uniformity if and only if for any isomorphic topological embedding $h:X\to Y$ into a Hausdorff topological group $Y$ the image $h(X)$ is closed in $Y$.

In this paper we prove a similar result for topological semilattices.
A {\em topological semilattice} is a topological space $X$ endowed with a continuous binary operation $X\times X\to X$, $(x,y)\mapsto xy$, which is associative, commutative and {\em idempotent} in the sense that $xx=x$ for all $x\in X$.
\smallskip

We define a Hausdorff topological semigroup $X$ to be
 \begin{itemize}
 \item {\em $H$-closed} if for any isomorphic topological embedding $h:X\to Y$ to a Hausdorff topological semigroup $Y$ the image $h(X)$ is closed in $Y$;
 \item {\em absolutely $H$-closed} if for any continuous homomorphism $h:X\to Y$ to a Hausdorff topological semigroup $Y$ the image $h(X)$ is closed in $Y$.
 \end{itemize}
(Absolutely) $H$-closed topological semigroups were introduced by Stepp in \cite{Stepp1969} (resp. \cite{Stepp1975}) who called them ({\em absolutely}) {\em maximal} topological semigroups. More information on (absolutely) $H$-closed topological semigroups can be found in
\cite{Ban,BanRav2001,Bardyla-Gutik-2012,BardGut-2016(2),
Bardyla-Gutik-Ravsky,Batikova2009,ChuchmanGutik2007,Gutik-2017,Gutik-2014,
Gutik-Pavlyk-2003,GutikPagonRepovs2010,GutikPavlyk2001,GutikPavlyk2003,
GutikRepovs2008,Raikov1946,Ravsky2003,Stepp1969,Stepp1975,Yokoyama2013}.

In this paper we are concentrated at the problem of detecting (absolutely) $H$-closed topological semilattices. For discrete topological semilattices this problem has been resolved by combined efforts of Stepp \cite{Stepp1975} and Banakh, Bardyla \cite{BB17} who proved that a discrete topological semilattice $X$ is absolutely $H$-closed if and only if $X$ is $H$-closed if and only if all chains in $X$ are finite.

A subset $C$ of a semilattice $X$ is called a {\em chain} if $xy\in \{x,y\}$ for all $x,y\in C$. This is equivalent to saying that any two elements of $C$ are comparable in the {\em natural partial order} $\le$ on $X$, defined by $x\le y$ iff $xy=x$. Endowed with this partial order, each semilattice becomes a {\em poset}, i.e., a set  endowed with a partial order. In a Hausdorff topological semilattice $X$ the partial order $\{(x,y)\in X\times X:x\le y\}$ is a closed subset of $X\times X$, which means that $X$ is a {\em pospace}, i.e., a topological space  endowed with a closed partial order. A semilattice $X$ is {\em linear} if the natural partial order on $X$ is linear (i.e., $X$ is a chain in $X$).

The following characterization of (absolutely) $H$-closed discrete topological semilattices is a combined result of Stepp \cite{Stepp1975} and Banakh, Bardyla~\cite{BB17} (who proved the equivalences $(2)\Leftrightarrow(3)$ and $(1)\Leftrightarrow(3)$, respectively).

\begin{theorem}[Stepp, Banakh, Bardyla]\label{t:Stepp} For a discrete topological semilattice $X$ the following conditions are equivalent:
\begin{enumerate}
\item $X$ is $H$-closed;
\item $X$ is absolutely $H$-closed;
\item all chains in $X$ are finite.
\end{enumerate}
\end{theorem}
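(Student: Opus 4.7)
The plan is to prove $(1)\Leftrightarrow(2)\Leftrightarrow(3)$ with $(2)\Rightarrow(1)$ being immediate, since every isomorphic topological embedding into a Hausdorff topological semigroup is in particular a continuous homomorphism. The main technical tool for both remaining directions is a \emph{chain pull-back} principle: given a continuous homomorphism $h:X\to Y$ and a strictly descending chain $y_1>y_2>\cdots$ in the image $h(X)$, I pick preimages $x_i\in h^{-1}(y_i)$ and form the partial products $x'_i:=x_1x_2\cdots x_i$; these satisfy $x'_{i+1}=x'_i\cdot x_{i+1}\le x'_i$ and $h(x'_i)=y_1y_2\cdots y_i=y_i$, hence form a strictly descending infinite chain in $X$. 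A dual construction treats strictly ascending chains, so every infinite chain in $h(X)$ lifts to an infinite chain in $X$.

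For $(3)\Rightarrow(2)$, I take a continuous homomorphism $h:X\to Y$ into a Hausdorff topological semigroup and aim to show $h(X)$ is closed. The image $h(X)$ is a commutative band in $Y$, and by chain pull-back its chains are finite. Joint continuity of multiplication together with Hausdorffness forces any limit point $p$ of a net in $h(X)$ to be idempotent and to commute with every element of $h(X)$, so the closure $\overline{h(X)}$ is itself a commutative band and hence a Hausdorff topological semilattice in which $h(X)$ is dense. For $p\in\overline{h(X)}$ with $h(x_\alpha)\to p$, I would analyze the descending net $p\cdot h(x_\alpha)\le h(x_\alpha)$ in $\overline{h(X)}$ and invoke the finite chain condition on $h(X)$ to force stabilization on an actual element of $h(X)$, thereby placing $p$ in $h(X)$.

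For $(1)\Rightarrow(3)$ I argue by contrapositive: from an infinite chain in $X$ I extract, by a Ramsey reduction, a subchain $C=\{c_n\}$ of order type $\omega$ or $\omega^*$, and build $Y=X\sqcup\{\infty\}$ as a candidate extension in which $X$ keeps its discrete topology, neighborhoods of $\infty$ are $\{\infty\}$ together with cofinite tails of $C$ (so that $\infty\in\overline{X}\setminus X$), and the multiplication is extended by $\infty\cdot\infty=\infty$ plus a prescription for $\infty\cdot x$ chosen to make the extended operation jointly continuous. The hard part will be this last step: joint continuity demands that for every $x\in X$ the net $(c_n\cdot x)_n$ converge in $Y$ to the declared value $\infty\cdot x$, and when this net fails to stabilize inside $X$ one must either refine $C$ to a well-chosen subchain on which $c_n\cdot x$ does stabilize, or enrich $Y$ by additional limit points that absorb the obstruction. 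Once joint continuity and Hausdorffness of $Y$ are verified, $X$ appears as a non-closed subsemilattice of the Hausdorff topological semigroup $Y$, so $X$ is not $H$-closed, completing the equivalence.
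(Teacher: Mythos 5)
The paper does not actually prove this theorem: it is quoted as a combined result of Stepp \cite{Stepp1975} (for $(2)\Leftrightarrow(3)$) and Banakh--Bardyla \cite{BB17} (for $(1)\Leftrightarrow(3)$), with the remark that $(3)\Rightarrow(2)$ also follows from Theorem~\ref{t:BB}, itself cited from \cite{BB17}. So your attempt must stand on its own, and as written it has genuine gaps in both hard directions. In the chain pull-back principle, the descending case is correct ($x'_i=x_1\cdots x_i$ works), but there is no ``dual construction'' for ascending chains: a semilattice has only a meet, so for $y_1<y_2<\cdots$ you cannot form increasing partial products. The statement is still true, but by a different argument: for each $i$ the descending chain $x_i\ge x_ix_{i+1}\ge x_ix_{i+1}x_{i+2}\ge\cdots$ stabilizes by chain-finiteness of $X$ at some $w_i$ with $h(w_i)=y_i$ and $w_i=x_i w_{i+1}\le w_{i+1}$, and then $w_1<w_2<\cdots$ is an infinite ascending chain in $X$. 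More seriously, in $(3)\Rightarrow(2)$ the decisive step --- ``analyze the descending net $p\cdot h(x_\alpha)$ and invoke the finite chain condition on $h(X)$ to force stabilization'' --- does not work as stated: the elements $p\cdot h(x_\alpha)$ lie in $\overline{h(X)}$, not in $h(X)$; they need not form a chain (the net $h(x_\alpha)$ is not monotone); and no mechanism is offered that produces a chain on which finiteness could bite. This is precisely where the known proofs need real machinery, e.g.\ the fact that every non-empty subsemilattice of a chain-finite semilattice has a least element, applied to non-empty subsemilattices manufactured from a system of neighborhoods with $V_{n+1}V_{n+1}\subset V_n$ around the putative limit point, in the spirit of Lemma~\ref{l:w} and Lemma~\ref{l:super} of this paper, or the chain-compactness route through Theorem~\ref{t:BB}.

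The direction $(1)\Rightarrow(3)$ is likewise only a plan. You correctly isolate the obstruction: joint continuity at $(\infty,x)$ forces $(c_nx)_n$ to converge to the declared value of $\infty\cdot x$; for $x\in{\uparrow}C$ this sequence is eventually a tail of $C$, so $\infty\cdot x$ must be $\infty$, while for other $x$ it may be an infinite descending chain disjoint from $C$ that converges nowhere in your space. But the proposed remedies --- refine $C$ or add further limit points --- are not carried out, and neither is routine: refining $C$ so as to control $(c_nx)_n$ simultaneously for all (possibly uncountably many) $x\in X$ is not a simple diagonalization, and adjoining new limit points recreates the same continuity problem at those points. Until this construction is completed (it is essentially the content of the cited result of \cite{BB17}), the contrapositive of $(1)\Rightarrow(3)$ is not established. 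In short: $(2)\Rightarrow(1)$ and the descending pull-back are fine, but both substantive implications are left at the level of intent rather than proof.
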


The implication $(3)\Ra(2)$ in this theorem can be also derived from the following characterization proved by Banakh and Bardyla in \cite{BB17}.

\begin{theorem}[Banakh, Bardyla]\label{t:BB} For a Hausdorff topological semilattice $X$ the following conditions are equivalent:
\begin{enumerate}
\item all closed chains in $X$ are compact;
\item all maximal chains in $X$ are compact;
\item each non-empty chain $C\subset X$ has $\inf C\in\overline{C}$ and $\sup C\in\overline{C}$;
\item each closed subsemilattice of $X$ is absolutely $H$-closed;
\item each closed chain in $X$ is $H$-closed.
\end{enumerate}
\end{theorem}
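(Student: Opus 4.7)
The plan is to prove the equivalences by closing the order-theoretic cycle $(1)\Leftrightarrow(2)\Leftrightarrow(3)$ and then running the $H$-closedness cycle $(1)\Ra(4)\Ra(5)\Ra(3)$, relying throughout on two structural facts about Hausdorff topological semilattices: the partial order is closed, so closures of chains are chains and the principal sets $\{x\le a\}$, $\{x\ge a\}$ are closed; and limits of idempotents in a Hausdorff topological semigroup are idempotent.

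For the order-theoretic part, $(1)\Ra(2)$ is immediate because the closure of a maximal chain is again a chain, forcing maximal chains to be closed. For $(2)\Ra(3)$, extend a non-empty chain $C$ by Zorn to a maximal chain $M$; compactness of $M$ gives $\inf C,\sup C\in M$, and the closedness of the intervals $\{x\le a\}\cap M$ and $\{x\ge a\}\cap M$ puts these bounds in $\overline C$. For $(3)\Ra(1)$, a closed chain $C$ is complete in the sense that every non-empty $D\subset C$ has $\inf D,\sup D\in\overline D\subset C$. Given an open cover $\U$ of $C$, set
\[A=\{x\in C:\{c\in C:c\le x\}\text{ has a finite subcover from }\U\}\]
and $s=\sup A\in C$. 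For $U_i\in\U$ with $s\in U_i$, the closed set $\{c\le s:c\notin U_i\}$ has supremum strictly below $s$ (if non-empty), which extends a finite subcover of $\{c\le s\}$ across $s$, so $s\in A$; a symmetric infimum argument above $s$ forces $s=\max C$, producing a finite subcover of $C$.

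The crucial implication is $(1)\Ra(4)$. Because closed subsemilattices inherit $(1)$, it suffices to show $X$ itself absolutely $H$-closed. For a continuous homomorphism $h\colon X\to Y$ and $y\in\overline{h(X)}$, the limit-of-idempotents principle gives $y=y^2$, and commutativity of $X$ yields $y\cdot h(s)=h(s)\cdot y$ for $s\in X$. Starting from a net $(s_\alpha)$ in $X$ with $h(s_\alpha)\to y$, build by transfinite recursion a decreasing well-ordered sequence $(x_\beta)_{\beta<\kappa}$ in $X$ with $h(x_\beta)\to y$: at a successor $\beta{+}1$ put $x_{\beta+1}=x_\beta\cdot s_{\alpha(\beta)}$ where $h(s_{\alpha(\beta)})$ is close enough to $y$ that continuity of multiplication in $Y$ and $y=y^2$ force $h(x_{\beta+1})$ to stay near $y$; at a limit $\lambda$ select $x_\lambda$ in the compact closed chain $\overline{\{x_\beta:\beta<\lambda\}}$, compactness being supplied by $(1)$. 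The closure of the final chain is compact by $(1)$, so $(x_\beta)$ has a cluster point $x\in X$ and $h(x)=y$ by continuity of $h$ together with Hausdorffness of $Y$. The implication $(4)\Ra(5)$ is immediate because closed chains are closed subsemilattices. For $(5)\Ra(3)$, argue by contradiction: if $\overline D$ lacks a minimum for some non-empty chain $D$, adjoin a virtual bottom $*$ with basic neighborhoods $\{*\}\cup\{d\in\overline D:d<e\}$, obtaining a Hausdorff topological semilattice in which $\overline D$ sits as a dense, non-closed subsemilattice; this contradicts the $H$-closedness of the closed chain $\overline D$ granted by $(5)$. A symmetric argument places $\sup D$ in $\overline D$.

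The main obstacle will be $(1)\Ra(4)$: converting a net convergent in the image into an honest preimage in $X$ requires a transfinite construction of a decreasing chain of candidates whose compact closure (furnished by $(1)$) yields a cluster point whose $h$-image is forced to equal exactly $y$ rather than some other idempotent of $\overline{h(X)}$, and this forcing relies on Hausdorffness of $Y$ together with $y=y^2$. A subsidiary technical issue appears in $(5)\Ra(3)$: the virtual-minimum topology separates $*$ from a given $d\in\overline D$ only when $d$ admits a neighborhood avoiding arbitrarily small elements of $\overline D$, a condition that has to be extracted from the absence of minimum together with the closedness of the principal up-sets $\{x\ge e\}$ inherent to the pospace structure.
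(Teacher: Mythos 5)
Your order-theoretic cycle $(1)\Leftrightarrow(2)\Leftrightarrow(3)$ and the steps $(4)\Ra(5)$ and $(5)\Ra(3)$ are essentially sound: the closure of a chain is a chain because $\{(x,y):xy\in\{x,y\}\}$ is closed, the compactness argument for $(3)\Ra(1)$ via the set $A$ of points below which a finite subcover exists goes through (the needed suprema and infima land inside the closed chain by $(3)$, even though the subspace topology need not be the order topology), and the adjunction of a virtual bottom/top in $(5)\Ra(3)$ does yield a Hausdorff topological semilattice (for the top one must also check continuity of $(\top,d)\mapsto d$, which uses that $d$ has neighborhoods bounded above by some $e>d$, namely $V\setminus{\uparrow}e$). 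Note that the paper itself does not prove this theorem --- it imports it from \cite{BB17} --- but its Sections 3--7 effectively reprove the hard implication $(3)\Ra(4)$ by a very different mechanism.

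The genuine gap is in $(1)\Ra(4)$, which is the heart of the theorem, at the successor step of your transfinite recursion. Writing $x_{\beta+1}=x_\beta\cdot s_{\alpha(\beta)}$, you have $h(x_{\beta+1})=h(x_\beta)\,h(s_{\alpha(\beta)})$, and choosing $h(s_{\alpha(\beta)})$ close to $y$ only places this product near $h(x_\beta)\cdot y$, which is $\le y$ and need not be anywhere near $y$; the product is near $y=y^2$ only if $h(x_\beta)$ is \emph{already} inside a neighborhood $U$ with $UV$ contained in the target neighborhood, and this control degrades by one level of a telescoping chain $W_{n+1}W_{n+1}\subset W_n$ at every multiplication. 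Such a chain controls only countably many steps: at stage $\w$ you know merely $h(x_\w)\in\overline{W_0}$, and in a Hausdorff (non-regular, non-first-countable) $Y$ the set $\bigcap_{n\in\w}\overline{W_n}$ need not reduce to $\{y\}$, so there is no mechanism to restart the recursion or to conclude $h(x)=y$ for the final cluster point; the invariant ``$h(x_\beta)\to y$'' is exactly what the construction fails to maintain past $\w$. This is why the paper's Key Lemma (Lemma \ref{l:super}) abandons the idea of constructing a preimage of $y$ altogether: assuming $y\in\overline{S}\setminus S$, it proves by transfinite induction on a cardinal $\kappa$ that intersections $\bigcap_{\alpha\in\kappa}\overleftrightarrow{\bigcap_{n\in\w}\bar\uparrow\bar\downarrow h^{-1}(U_{\alpha,n})}$ are non-empty (each factor is a chain-closed subsemilattice with a smallest element, and these smallest elements form an increasing chain whose supremum survives into the full intersection), and then derives a contradiction because for each $x\in X$ one neighborhood family excludes $x$, so the intersection over $\kappa=|X|$ families is empty. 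To repair your proof you would need to replace the recursion by an argument of this filter/intersection type, or at least explain how convergence of $h(x_\beta)$ to $y$ is preserved at limit stages of uncountable cofinality; as written the step fails.
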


Here by $\overline{C}$ we denote the closure of a set $C$ is a topological space $X$.

 A subset $A$ of a poset $(X,\le)$ is called
 {\em lower} (resp. {\em upper}) if $A={\downarrow}A$ (resp. $A={\uparrow}A$) where $${\downarrow}A=\{x\in X:\exists a\in A \;(x\le a)\}\mbox{ \ and \ }{\uparrow}A=\{x\in X:\exists a\in A \;(x\ge a)\}.$$

In 2008 Gutik and Repov\v s proved the following characterization of  (absolutely) $H$-closed linear semilattices.

\begin{theorem}[Gutik-Repov\v s]\label{t:GR} For a linear Hausdorff topological semilattice $X$ the following conditions are equivalent:
\begin{enumerate}
\item $X$ is $H$-closed;
\item $X$ is absolutely $H$-closed;
\item any non-empty chain $C\subset X$ has $\inf C\in\overline{{\uparrow}C}$ and $\sup C\in \overline{{\downarrow}C}$ in $X$.
\end{enumerate}
\end{theorem}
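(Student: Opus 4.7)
The implication $(2) \Rightarrow (1)$ is immediate via $h = \id_X$. For $(3) \Rightarrow (2)$, let $h \colon X \to Y$ be a continuous homomorphism into a Hausdorff topological semigroup, fix $y \in \overline{h(X)}$, and write $y = \lim h(x_\alpha)$. Since $X$ is linear, $C := h(X)$ is a chain of pairwise-commuting idempotents in $Y$. Using that $c \cdot h(x_\alpha) \in \{c, h(x_\alpha)\}$ for fixed $c \in C$, a cofinal-subnet argument yields $y^2 = y$ and $cy = yc \in \{c, y\}$ for every $c \in C$. These identities partition $X$ into the closed sets $\tilde A := \{x \in X : h(x) y = h(x)\}$ (a lower set) and $\tilde B := \{x \in X : h(x) y = y\}$ (an upper set), which are disjoint precisely when $y \notin C$. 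Assume $y \notin C$ for contradiction and apply (3) to the chains $\tilde A, \tilde B$ when both are nonempty (yielding $a := \max \tilde A \in \tilde A$ and $b := \min \tilde B \in \tilde B$, since $\overline{{\downarrow}\tilde A} = \tilde A$ and $\overline{{\uparrow}\tilde B} = \tilde B$), or to $X$ itself when one of them is empty. Selecting a cofinal subnet of $(x_\alpha)$ lying in $\tilde A$ (or $\tilde B$), multiplying by $h(a)$ (respectively $h(b)$), and passing to the limit, the pair $h(a) y = h(a)$ and $y h(a) = y$ together with commutativity collapses to $h(a) = y \in C$, a contradiction.

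For $(1) \Rightarrow (3)$ I argue contrapositively: if (3) fails, I construct a Hausdorff topological semilattice $Y \supsetneq X$ in which $X$ is not closed, contradicting $H$-closedness. Passing to the order-dual if necessary, there is a nonempty chain $C \subset X$ with $\inf C$ either not existing in $X$ or existing but not lying in $\overline{{\uparrow}C}$. In either case $\min C$ is absent from $C$, so ${\uparrow}C$ has no minimum. The closed lower-bound set $L := \bigcap_{c \in C}{\downarrow}c$ turns out to be clopen in $X$: each $l \in L$ that is not the maximum of $L$ is covered by the open set $\{z \in X : z < l'\} \subseteq L$ for any $l' \in L$ with $l' > l$, while $\max L = \inf C$ (when it exists) is covered by the hypothesis-supplied open neighbourhood disjoint from ${\uparrow}C$. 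Consequently ${\uparrow}C = X \setminus L$ is also open. Set $Y := X \sqcup \{*\}$ and extend the multiplication by $* \cdot * := *$, $* \cdot x := *$ for $x \in {\uparrow}C$, and $* \cdot x := x$ for $x \in L$, so that $*$ becomes the infimum of ${\uparrow}C$ in the extended order. Keep the topology on $X$, declare $X$ open in $Y$, and take as basic neighbourhoods of $*$ the sets $\{*\} \cup V_d$ with $V_d := \{x \in {\uparrow}C : x < d\}$ for $d \in {\uparrow}C$; linearity makes each $V_d$ a subsemilattice of $X$, and $V_{d_1} \cap V_{d_2} = V_{\min(d_1, d_2)}$, so these form a filter base.

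The main obstacle is verifying that $Y$ is a Hausdorff topological semilattice. Openness of $V_d$ in $X$ follows from openness of ${\uparrow}C$ and of $X \setminus {\uparrow}d$ (pospace property). Hausdorff separation of $*$ from a point $x_0 \in X$ splits into the case $x_0 \in L$ (use openness of $L$) and $x_0 \in {\uparrow}C$ (use the open set $X \setminus {\downarrow}d$ for some $d \in {\uparrow}C$ with $d < x_0$, available because ${\uparrow}C$ has no minimum). Joint continuity of the binary operation at $(*, *)$ and at $(*, x_0)$ reduces, by linearity, to the identity $V_d \cdot V_d = V_d$ together with the elementary observation that $v \cdot \ell = \ell$ for $v \in V_d$ and $\ell \in L$ (because $\ell \le c \le v$ for any $c \in C$ with $c \le v$); one simply chooses a neighbourhood of $x_0$ inside the clopen part ($L$ or ${\uparrow}C$) containing $x_0$. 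Once $Y$ is confirmed to be a Hausdorff topological semilattice, $*$ lies in $\overline{{\uparrow}C} \subseteq \overline{X}$ in $Y$ while $* \notin X$, so $X$ is not closed in $Y$, contradicting (1).
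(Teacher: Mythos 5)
Your argument is correct, but note that the paper itself offers no proof of Theorem~\ref{t:GR}: it is imported from Gutik--Repov\v s \cite{GutikRepovs2008} as a known result, so there is nothing internal to compare against line by line. Judged on its own, your proof is sound and essentially self-contained. For $(3)\Rightarrow(2)$ the decomposition of $X$ into the closed lower set $\tilde A=\{x:h(x)y=h(x)\}$ and the closed upper set $\tilde B=\{x:h(x)y=y\}$, followed by applying $(3)$ to extract $\max\tilde A$ and $\min\tilde B$ and collapsing the limit via commutativity, works; this direction could alternatively be deduced from the paper's machinery, since for linear $X$ condition $(3)$ is exactly $s$-completeness ($=c$-completeness), linear pospaces are topological lattices by Proposition~\ref{p:Ward}, hence down-open by Corollary~\ref{c:l=>do}, and Main Theorem~\ref{main}(3) then applies --- your net argument is the more elementary route. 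The direction $(1)\Rightarrow(3)$, which the paper's results do not yield at all, is handled by an explicit one-point extension, and the key observations (that $C$ has no minimum forces ${\uparrow}C$ to have no minimum, that the lower-bound set $L$ is clopen, and that the filter base $\{\ast\}\cup V_d$ yields a Hausdorff topological semilattice in which $X$ is open but not closed) all check out.

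One point you leave implicit deserves a sentence: the reduction ``passing to the order-dual if necessary'' is not free for topological semilattices, since the dual operation $(x,y)\mapsto\sup\{x,y\}$ need not be continuous in general. Here it is legitimate because both $X$ and the constructed $Y$ are \emph{linear} Hausdorff topological semilattices, hence linear pospaces, hence topological lattices by Proposition~\ref{p:Ward}, so both $\min$ and $\max$ are continuous and the dual of the extension $Y^{\mathrm{op}}\supset X^{\mathrm{op}}$ can be dualized back to an extension of $(X,\min)$. With that justification added, the proof is complete.
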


In \cite{Yokoyama2013} Yokoyama extended Gutik-Repov\v s Theorem~\ref{t:GR} to topological pospaces with finite antichains.

Trying to extend Gutik-Repov\v s characterization to all (not necessarily linear) topological semilattices we have discovered that the last condition of Theorem~\ref{t:GR} admits at least three non-equivalent versions, defined as follows.

\begin{definition}\label{d:complete} A topological semilattice $X$ is defined to be
\begin{itemize}
\item {\em $k$-complete} if each non-empty chain $C\subset X$ has $\inf C\in \overline C$ and $\sup C\in\overline C$;
\item {\em $s$-complete} if each non-empty subsemilattice $S\subset X$ has $\inf S\in\overline{{\uparrow}S}$ and each non-empty chain $C\subset X$ has $\sup C\in\overline{{\downarrow}C}$;
\item {\em $c$-complete} if for each closed upper set $F\subset X$, each non-empty chain $C\subset F$ has $\inf C\in F$ and $\sup C\in \overline{{\downarrow}C}$ in $X$.
\end{itemize}
\end{definition}
In Proposition~\ref{p:diag} we shall prove that for any topological semilattice the following implications hold:
$$\mbox{$k$-complete $\Ra$ $s$-complete $\Ra$ $c$-complete}.$$
By Theorem~\ref{t:BB}, a Hausdorff topological semilattice is $k$-complete if and only if all maximal chains in $X$ are compact. Theorems~\ref{t:Stepp} and \ref{t:GR} imply that a discrete or linear Hausdorff topological semilattice $X$ is $s$-complete if and only if $X$ is $c$-complete if and only if $X$ is (absolutely) $H$-closed.

These completeness properties of topological semilattices will be paired with the following notions.

\begin{definition} We say that a pospace $(X,\le)$ is
\begin{itemize}
\item {\em well-separated} if for any points $x<y$ there exists a neighborhood $V\subset X$ of $y$ such that $x\notin{\uparrow}\overline{{\uparrow}\overline{{\uparrow}V}}$;
\item {\em down-open} if for every open set $U\subset X$ the lower set ${\downarrow}U$ is open in $X$;
\item a {\em topological lattice} if any two points $x,y\in X$ have $\inf\{x,y\}$ and $\sup\{x,y\}$ and the binary operations $\wedge:X\times X\to X$, $\wedge:(x,y)\mapsto\inf\{x,y\}$, and $\vee:X\times X\to X$, $\vee:(x,y)\mapsto\sup\{x,y\}$ are continuous.
\end{itemize}
\end{definition}

In Lemma~\ref{l:well} we shall prove that for a Hausdorff topological semilattice the following implications hold:
$$\mbox{linear $\Ra$ topological lattice $\Ra$ down-open $\Ra$ well-separated.}$$

The main result of this paper is the following theorem, which will be proved in Section~\ref{s:proof}, after some preparatory work made in Sections~\ref{s:ul}--\ref{s:tech}.

\begin{mainth}\label{main} Let $h:X\to Y$ be a continuous homomorphism from a topological semilattice $X$ to a Hausdorff topological semigroup $Y$. The image $h(X)$ is closed in $Y$ if one of the following conditions is satisfied:
\begin{enumerate}
\item $X$ is $k$-complete;
\item $X$ is $s$-complete and $Y$ is well-separated;
\item $X$ is $c$-complete and $X$ or $Y$ is down-open.
\end{enumerate}
\end{mainth}



Some corollaries of Theorem~\ref{main} can be formulated using the notion of a $\vec\C$-closed topological semilattice for a category $\vec\C$ whose objects are topological semigroups and morphisms are continuous homomorphisms between topological semigroups.

Given a class $\C$ of topological semigroups by $\hC$ and $\eC$ we denote the category whose objects are topological semigroups in the class $\C$ and morphisms are continuous homomorphisms and isomorphic topological embeddings of topological semigroups in the class $\C$, respectively.

\begin{definition} Let $\vec\C$ be a category whose objects are topological semigroups and morphisms are continuous homomorphisms between topological semigroups.
An object $X$ of the category $\vec\C$ is called {\em $\vec\C$-closed} if for any morphism $h:X\to Y$ of the category $\vec\C$ the image $h(X)$ is closed in $Y$.

In particular, for a class $\C$ of topological semigroup, a topological semigroup $X\in\C$ is called
\begin{itemize}
\item {\em $\hC$-closed} if for any continuous homomorphism $f:X\to Y\in\C$ the image $f(X)$ is closed in $Y$;
\item {\em $\eC$-closed} if for each isomorphic topological embedding $f:X\to Y\in\C$ the image $f(X)$ is closed in $Y$.
\end{itemize}
\end{definition}
Therefore a topological semigroup $X$ is (absolutely) $H$-closed if and only if it is $\eTS$-closed (resp. $\hTS$-closed) for the class $\TS$ of Hausdorff topological semigroups.

We shall be interested in $\hC$-closedness for the following classes $\C$ of topological semilattices:
\begin{itemize}
\item $\TsL$ of all Hausdorff topological semilattices;
\item $\TsLw$ of all well-separated Hausdorff topological semilattices;
\item $\TsLo$ of all down-open Hausdorff topological semilattices;
\item $\TL$ of Hausdorff topological lattices.
\end{itemize}

The results of this paper allow us to draw the following diagram, containing implications between various completeness and closedness properties of a Hausdorff topological semilattice. The implications from this diagram will be proved in Section~\ref{s:diagram}.
$$\xymatrix{
\mbox{compact}\atop\mbox{linear}\ar@{=>}[r]\ar@{=>}[d]&
\mbox{$s$-complete}\atop\mbox{linear}\ar@{<=>}[r]\ar@{=>}[d]&
\mbox{$c$-complete}\atop\mbox{linear}\ar@{=>}[r]\ar@{=>}[d]&
\mbox{$c$-complete}\atop\mbox{topological lattice}\ar@{=>}[dd]\\
\mbox{has compact}\atop\mbox{maximal chains}\ar@{=>}[r]\ar@{<=>}[d]&
\mbox{has $s$-complete}\atop\mbox{maximal chains}\ar@{<=>}[r]&
\mbox{has $c$-complete}\atop\mbox{maximal chains}\ar@{=>}[d]\\
\mbox{$k$-complete}\ar@{=>}[r]\ar@{=>}[d]&
\mbox{$s$-complete}\ar@{=>}[r]\ar@{=>}[d]&\mbox{$c$-complete}\ar@{=>}[d]
&\mbox{$c$-complete}\atop\mbox{down-open}\ar@{=>}[l]\ar@{=>}[d]\\
\mbox{${\mathsf h}{:}\!\TsL$-closed}\ar@{=>}[r]&
\mbox{${\mathsf h}{:}\!\TsLw$-closed}\ar@{=>}[r]&
\mbox{${\mathsf h}{:}\!\TsLo$-closed}\ar@{=>}[d]&\mbox{${\mathsf h}{:}\!\TsL$-closed}\ar@{=>}[l]\\
&&
\mbox{${\mathsf h}{:}\!\TL$-closed}.
}
$$

\section{Upper and lower sets in topological semilattices}\label{s:ul}

In this section we prove some auxiliary results related to upper and lower sets in topological semilattices.

Simple examples show that in general, the closure of an (upper) lower set in a pospace is not necessarily an (upper) lower set in the pospace. However, in semilattices we have the following positive result.

\begin{lemma}\label{l:updown} Let $X$ be a topological semilattice.
\begin{enumerate}
\item For any open set $U\subset X$ the upper set ${\uparrow}U$ is open in $X$.
\item The interior of any upper set $U\subset X$ is an upper set in $X$.
\item The closure $\bar L$ of any lower set $L\subset X$ is a lower set in $X$.
\end{enumerate}
\end{lemma}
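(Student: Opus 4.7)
My plan is to prove (1) directly, extract (2) as an immediate corollary of (1), and then establish (3) by a parallel continuity-plus-absorption argument.

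For (1), I would fix a point $x\in{\uparrow}U$ and a witness $u\in U$ with $u\le x$, i.e.\ $xu=u$. Continuity of the semilattice operation at $(x,u)$ yields open neighborhoods $V$ of $x$ and $W$ of $u$ with $V\cdot W\subset U$. The crucial observation is that for every $v\in V$ the element $vu$ lies in $V\cdot W\subset U$ and satisfies $vu\le v$ (which follows from idempotence and associativity: $(vu)v=(vv)u=vu$). Hence $v\in{\uparrow}\{vu\}\subset{\uparrow}U$, so $V\subset{\uparrow}U$ and ${\uparrow}U$ is open.

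For (2), I would deduce the statement straight from (1): if $U$ is an upper set and $V:=\mathrm{int}(U)$, then ${\uparrow}V$ is open by (1) and contained in ${\uparrow}U=U$, so ${\uparrow}V\subset V$. Combined with the trivial inclusion $V\subset{\uparrow}V$, this gives $V={\uparrow}V$, i.e., $V$ is an upper set.

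For (3), given a lower set $L$, a point $y\in\bar L$ and $x\le y$, I would check $x\in\bar L$ by showing every open neighborhood $W$ of $x$ meets $L$. Since $xy=x\in W$, continuity supplies open $A\ni x$ and $B\ni y$ with $A\cdot B\subset W$. Picking $l\in B\cap L$ (nonempty because $y\in\bar L$), the product $xl$ lies in $A\cdot B\subset W$, while $xl\le l\in L$ together with the lower-set property of $L$ forces $xl\in L$. Thus $W\cap L\ne\emptyset$, and $x\in\bar L$ as required.

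I do not expect a genuine obstacle here; all three statements ultimately rest on the two semilattice identities $xy\le x,y$ and the equivalence ``$x\le y \Leftrightarrow xy=x$'' together with continuity of the operation. The only step that requires any cleverness is the choice of the auxiliary element $vu$ in (1) (and its counterpart $xl$ in (3)): continuity of multiplication alone would not suffice in a general pospace, and it is the absorption/idempotence built into the semilattice structure that supplies the required order comparison between the auxiliary element and the candidate point.
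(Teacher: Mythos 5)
Your proof is correct. Parts (1) and (2) follow essentially the same route as the paper: for (1) the paper uses continuity of the single shift $z\mapsto uz$ (taking $O_x=\{z\in X: uz\in U\}$) where you invoke joint continuity at $(x,u)$ to get a box neighborhood $V\cdot W\subset U$, but the key point — that the auxiliary element $vu$ lies in $U$ and sits below $v$ — is identical; for (2) both arguments amount to observing that ${\uparrow}(\mathrm{int}\,U)$ is an open subset of $U$ and hence contained in the interior. The only real divergence is in (3): the paper gets it for free from (2) by complementation ($X\setminus L$ is an upper set, its interior is an upper set by (2), and the complement of that interior is exactly $\bar L$), whereas you give a direct neighborhood-chasing argument with the auxiliary element $xl$. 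Your version is self-contained and makes the mechanism visible; the paper's is shorter and exploits the duality between upper and lower sets. Both are fine.
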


\begin{proof} Given an open set $U\subset X$, for every point $x\in{\uparrow}U$ choose a point $u\in U$ with $u\le x$ and observe that $O_x=\{z\in X:uz\in U\}$ is an open neighborhood of $x$, contained in ${\uparrow}U$.

Given any upper set $P\subset X$ consider its interior $P^\circ$ in $X$ and observe that for any point $x\in P^\circ$ there exists an open set $U\subset P$ containing the point $x$. Taking into account that the upper set ${\uparrow}U$ is open, we conclude that ${\uparrow}x\subset{\uparrow}U\subset P^\circ$, which means that $P^\circ$ is an upper set in $X$.

For any lower set $L\subset X$, the complement $X\setminus L$ is an upper set, whose interior $(X\setminus L)^\circ$ is an upper set in $X$. Then the complement $X\setminus (X\setminus L)^\circ=\bar L$ is a lower set in $X$.
\end{proof}

Applying Lemma~\ref{l:updown}(1) to the continuous operation $(x,y)\mapsto\sup\{x,y\}$ in a topological lattice, we obtain the following simple (but important) fact.

\begin{corollary}\label{c:l=>do} Each topological lattice $X$ is down-open.
\end{corollary}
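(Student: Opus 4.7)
The plan is to obtain the conclusion by applying Lemma~\ref{l:updown}(1) to the \emph{dual} semilattice structure on $X$ induced by the join operation $\vee$. Since $X$ is a topological lattice, the binary operation $\vee:X\times X\to X$ is continuous, commutative, associative, and idempotent, so $(X,\vee)$ is itself a topological semilattice in the sense of the paper's definition. Hence Lemma~\ref{l:updown}(1) applies verbatim to $(X,\vee)$.

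The key observation is then to identify the upper sets of $(X,\vee)$ with the lower sets of $(X,\cdot)$. Denote by $\preceq$ the natural partial order of the semilattice $(X,\vee)$, so that $x\preceq y$ iff $x\vee y=x$. But $x\vee y=x$ is equivalent to $y\le x$ in the original order of $X$. Therefore $\preceq$ is the reverse of $\le$, and for every subset $A\subset X$ the $\preceq$-upper set generated by $A$ coincides with the $\le$-lower set ${\downarrow}A$.

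Combining these two steps, for every open set $U\subset X$ Lemma~\ref{l:updown}(1) applied to the topological semilattice $(X,\vee)$ guarantees that the $\preceq$-upper set generated by $U$ is open in $X$; by the identification above this set is exactly ${\downarrow}U$. Thus ${\downarrow}U$ is open for every open $U\subset X$, which means that $X$ is down-open. There is no real obstacle here; the only thing to verify carefully is that passing from $\wedge$ to $\vee$ reverses the natural order, which is immediate from the definitions.
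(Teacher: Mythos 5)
Your proof is correct and follows exactly the paper's route: the paper likewise obtains the corollary by applying Lemma~\ref{l:updown}(1) to the continuous operation $(x,y)\mapsto\sup\{x,y\}$, under which the natural order is reversed and $\preceq$-upper sets become $\le$-lower sets. You have merely spelled out the order-reversal identification that the paper leaves implicit.
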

The following proposition is a straightforward corollary of \cite[Lemma~1]{Ward1954}.

\begin{proposition}\label{p:Ward} Each linear pospace is a topological lattice.
\end{proposition}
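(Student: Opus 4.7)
The plan is to verify directly that the lattice operations are continuous on $X\times X$; it suffices to treat $\wedge:(x,y)\mapsto\inf\{x,y\}$, since the argument for $\vee$ is symmetric (interchange the roles of $\le$ and its opposite). Note that, because the order on $X$ is linear, both $\wedge(x,y)$ and $\vee(x,y)$ exist and are simply the smaller (resp.\ larger) of the two arguments, so the maps are at any rate well defined.

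Fix $(x,y)\in X\times X$ and an open neighborhood $U\subset X$ of $\wedge(x,y)$. I would split into two cases. In the diagonal case $x=y$, one has $\wedge(x,y)=x\in U$, and for every $(a,b)\in U\times U$ the value $\wedge(a,b)$ belongs to $\{a,b\}\subset U$; thus $U\times U$ is a product neighborhood of $(x,y)$ sent by $\wedge$ into $U$.

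In the off-diagonal case assume, without loss of generality, $x<y$. The key input is that $X$ is a pospace, so the set $\{(a,b)\in X\times X:a\le b\}$ is closed in $X\times X$ and consequently $\{(b,a)\in X\times X:b\not\le a\}$ is open. Since $y\not\le x$, this gives open neighborhoods $V_0\ni x$ and $W_0\ni y$ with the property that $b\not\le a$ for all $(a,b)\in V_0\times W_0$; by linearity of the order, this means $a<b$, and hence $\wedge(a,b)=a$, throughout $V_0\times W_0$. The product $(V_0\cap U)\times W_0$ is therefore an open neighborhood of $(x,y)$ on which $\wedge$ takes values in $U$, as required.

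This establishes continuity of $\wedge$ pointwise on $X\times X$, hence $\wedge$ is continuous; the same scheme, with the roles of $V_0$ and $W_0$ interchanged, yields continuity of $\vee$, and the proposition follows. The only point demanding mild care is keeping track of the direction of the order when reading off separating rectangles from the closed-order hypothesis; beyond that, no genuine obstacle arises, which is why the statement is essentially folklore going back to \cite{Ward1954}.
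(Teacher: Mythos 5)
Your argument is correct. It is worth noting, though, that the paper does not actually prove this proposition: it disposes of it in one line by citing Lemma~1 of \cite{Ward1954}, so your direct verification is a genuinely different (and self-contained) route. Your case split is the right one: on the diagonal, $U\times U$ works because $\min\{a,b\}\in\{a,b\}$ for a linear order; off the diagonal, the closedness of the graph of $\le$ gives a product neighborhood $V_0\times W_0$ of $(x,y)$ on which the comparison $a<b$ is locked in (using linearity to upgrade $b\not\le a$ to $a<b$), so that $\wedge$ coincides with the first projection there, and shrinking $V_0$ to $V_0\cap U$ finishes the argument. The symmetric treatment of $\vee$ is fine. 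What your approach buys is independence from the external reference and an explicit display of exactly which hypotheses are used (closed order for the off-diagonal case, linearity for existence of $\min$/$\max$ and for the comparability upgrade); what the citation buys the authors is brevity. The only stylistic quibble is that "without loss of generality $x<y$" deserves the one-word justification that $\wedge$ is symmetric, but this is not a gap.
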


\begin{lemma}\label{l:neib}  Any points $x\not\le y$ of a pospace $X$ have open neighborhoods $V_x,V_y\subset X$ such that ${\uparrow}V_x\cap{\downarrow}V_y=\emptyset$.
\end{lemma}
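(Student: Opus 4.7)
The plan is to exploit the defining property of a pospace, namely that the graph of the order $R=\{(a,b)\in X\times X:a\le b\}$ is closed in $X\times X$. Since $x\not\le y$, the pair $(x,y)$ lies in the open complement $X\times X\setminus R$, so by the definition of the product topology there exist open neighborhoods $V_x$ of $x$ and $V_y$ of $y$ with $(V_x\times V_y)\cap R=\emptyset$. In other words, the pair $V_x,V_y$ already has the strong separation property $a\not\le b$ for all $a\in V_x$ and $b\in V_y$.

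I then claim that this same choice of $V_x,V_y$ works. To verify ${\uparrow}V_x\cap{\downarrow}V_y=\emptyset$, suppose for contradiction that some $z$ belonged to both upper closures. By definition of ${\uparrow}V_x$ there is $v_x\in V_x$ with $v_x\le z$, and by definition of ${\downarrow}V_y$ there is $v_y\in V_y$ with $z\le v_y$. Transitivity of the order then forces $v_x\le v_y$, i.e.\ $(v_x,v_y)\in(V_x\times V_y)\cap R$, contradicting the choice of $V_x,V_y$.

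There is no real obstacle in this argument: the whole proof is a one-step application of the closedness of $R$ combined with transitivity. The only conceptual remark worth making is that transitivity is precisely what promotes the pointwise incomparability supplied by the closed-order hypothesis into the apparently stronger statement that the full upper closure of $V_x$ is disjoint from the full lower closure of $V_y$.
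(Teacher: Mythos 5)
Your proof is correct and is essentially identical to the paper's: both use the closedness of the order to find $V_x\times V_y$ disjoint from the graph of $\le$, and then derive a contradiction via transitivity if ${\uparrow}V_x\cap{\downarrow}V_y$ were non-empty.
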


\begin{proof} Since the partial order $\le$ is closed, the points $x,y$ have open neighborhoods $V_x,V_y$ such that $V_x\times V_y$ is disjoint with the partial order $\le$ in $X\times X$. Consequently, $\tilde x\not\le\tilde y$ for any $\tilde x\in V_x$ and $\tilde y\in V_y$. We claim that ${\uparrow}V_x\cap {\downarrow}V_y=\emptyset$. Assuming that this intersection contains some point $v$, we could find points $\tilde x\in V_x$ and $\tilde y\in V_y$ such that $\tilde x\le v\le \tilde y$, which contradicts the choice of $V_x$ and $V_y$.
\end{proof}

\begin{corollary} Any points $x\not\le y$ in Hausdorff topological semilattice $X$ have open neighborhoods
$V_x,V_y\subset X$ such that ${\uparrow}V_x\cap{\downarrow}V_y=\emptyset$.
\end{corollary}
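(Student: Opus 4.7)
The plan is very short because this really is a one-line corollary of the preceding lemma. The key observation is that in a Hausdorff topological semilattice $X$ the natural partial order is automatically closed in $X\times X$: this fact is stated explicitly in the introduction of the paper (``In a Hausdorff topological semilattice $X$ the partial order $\{(x,y)\in X\times X:x\le y\}$ is a closed subset of $X\times X$, which means that $X$ is a pospace''). Hence every Hausdorff topological semilattice is a pospace in the sense used in Lemma~\ref{l:neib}.

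Given this, I would simply invoke Lemma~\ref{l:neib} verbatim. First I would recall that $X$, being a Hausdorff topological semilattice, carries the closed natural partial order $\le$ defined by $x\le y\Leftrightarrow xy=x$, so $(X,\le)$ is a pospace. Then, for points $x\not\le y$, Lemma~\ref{l:neib} immediately supplies open neighborhoods $V_x$ of $x$ and $V_y$ of $y$ with ${\uparrow}V_x\cap{\downarrow}V_y=\emptyset$, which is exactly the conclusion to be proved.

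There is no real obstacle here; in fact the only thing to verify is that the hypothesis of Lemma~\ref{l:neib} (closedness of $\le$) is automatic in the semilattice setting, and this is granted by the joint continuity of the semilattice operation together with the Hausdorff property (if $(x_\alpha,y_\alpha)\to (x,y)$ with $x_\alpha y_\alpha=x_\alpha$, then $xy=x$ by continuity of multiplication and uniqueness of limits). So the whole proof is essentially: ``$X$ is a pospace, so Lemma~\ref{l:neib} applies.''
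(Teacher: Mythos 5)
Your proof is correct and matches the paper's intent exactly: the corollary is stated without proof precisely because it follows from Lemma~\ref{l:neib} once one notes that every Hausdorff topological semilattice is a pospace. Your verification that the natural partial order is closed (via continuity of multiplication and uniqueness of limits) is the right justification for that step.
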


\section{Chain-closed sets in semilattices}\label{s2}

A subset $A$ of a poset $X$ is called
\begin{itemize}
\item {\em lower chain-closed in $X$} if $\inf C\in A$ for any chain $C\subset A$ possessing $\inf C\in X$;
\item {\em upper chain-closed in $X$} if $\sup C\in A$ for any chain $C\subset A$ possessing $\sup C\in X$;
\item {\em chain-closed in $X$} if $A$ is lower chain-closed and upper chain-closed in $X$.
\end{itemize}

A poset $(X,\le)$ is defined to be {\em chain-complete} if each chain $C\subset X$ has $\inf C$ and $\sup C$ in $X$. Definition~\ref{d:complete} implies that each $c$-complete topological semilattice is chain-complete.

The following lemma can be easily derived from the definitions.

\begin{lemma}\label{c:fid=>cc} Any closed upper or lower set in a $c$-complete topological semilattice is chain-closed.
\end{lemma}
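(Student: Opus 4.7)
The plan is to reduce the statement to a direct unpacking of the definition of $c$-completeness, handled separately for closed upper sets and closed lower sets. The key preliminary observation is that $c$-completeness (applied to the closed upper set $F=X$) forces every non-empty chain $C\subset X$ to have both $\inf C$ and $\sup C$ in $X$, so the existence hypotheses built into the definition of ``chain-closed'' are automatically satisfied. Hence it suffices, for a given closed upper or lower set $A\subset X$ and a non-empty chain $C\subset A$, to verify that $\inf C\in A$ and $\sup C\in A$.

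For a closed upper set $F$, the infimum statement $\inf C\in F$ is the first half of the $c$-completeness hypothesis applied directly to $F$ and $C$. For the supremum, $c$-completeness gives $\sup C\in X$; since $\sup C\ge c$ for every $c\in C\subset F$ and $F$ is an upper set, we conclude $\sup C\in F$.

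For a closed lower set $L$, the roles are reversed. The supremum statement follows from $c$-completeness applied to $F=X$: we get $\sup C\in\overline{{\downarrow}C}$, and since $C\subset L$ and $L$ is a lower set, ${\downarrow}C\subset L$; using the fact that $L$ is closed gives $\overline{{\downarrow}C}\subset L$, hence $\sup C\in L$. For the infimum, $\inf C$ exists in $X$ by the preliminary observation, and since $\inf C\le c$ for any $c\in C\subset L$ and $L$ is a lower set, $\inf C\in L$.

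There is no real obstacle here; the only point that deserves care is noticing that the ``$\sup C\in L$'' argument genuinely uses \emph{both} features of $L$ (being lower and being closed), whereas the ``$\inf C\in F$'' argument for upper sets needs no closedness at all, since it is written directly into the definition of $c$-completeness.
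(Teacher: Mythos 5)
Your proof is correct and is exactly the routine unpacking of the definitions that the paper has in mind (the paper states this lemma without proof, remarking only that it ``can be easily derived from the definitions''), including the key preliminary observation that applying $c$-completeness to $F=X$ yields chain-completeness, so the existential clauses in the definition of ``chain-closed'' are automatic. One small inaccuracy in your closing aside: the definition of $c$-completeness is formulated only for \emph{closed} upper sets, so the closedness of $F$ is precisely what licenses the appeal to that definition in the $\inf C\in F$ step, even though no further topological argument is needed there.
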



For two subsets $A,B$ of a semilattice $X$ let $AB=\{ab:a\in A,\;b\in B\}$ be their product in $X$.

The proof of the following simple fact can be found in
 Lemma III-1.2 of \cite{GHKLMS}.

\begin{lemma}\label{l:inf-c} Let $X$ be a semilattice and $A,B\subset X$ be two sets that have $\inf A$ and $\inf B$ in $X$. Then $(\inf A)\cdot(\inf B)=\inf(A\cdot B)$.
\end{lemma}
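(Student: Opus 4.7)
The plan is to unpack the definition of infimum in terms of the natural partial order $\le$ on the semilattice $X$ (where $u \le v$ iff $uv = u$) and use only the algebraic identities of a semilattice (commutativity, associativity, idempotence); no topology is needed here. Set $a := \inf A$ and $b := \inf B$. I want to show $ab = \inf(AB)$.

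First I would check that $ab$ is a lower bound of $AB$. Pick any $a' \in A$ and $b' \in B$. Since $a \le a'$ and $b \le b'$, monotonicity of the semilattice operation (which is immediate from the identities: $aa' = a$ and $bb' = b$ imply $(ab)(a'b') = (aa')(bb') = ab$) gives $ab \le a'b'$.

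Next I would verify that $ab$ is the greatest lower bound. Suppose $c \in X$ satisfies $c \le a'b'$ for every $a' \in A$ and every $b' \in B$. The key observation is that $a'b' \le a'$ in any semilattice (indeed $(a'b')a' = a'b'$), and similarly $a'b' \le b'$. Fixing any $b' \in B$ and letting $a'$ range over $A$, transitivity gives $c \le a'$ for every $a' \in A$, hence $c \le \inf A = a$. Symmetrically $c \le b$. Now $ca = c$ and $cb = c$ together yield $c(ab) = (ca)b = cb = c$, so $c \le ab$, as required.

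I do not anticipate any real obstacle: the only point worth stating carefully is the passage from ``$c$ is a lower bound of the product set $AB$'' to ``$c$ is a lower bound of each factor'', which relies precisely on the inequalities $a'b' \le a'$ and $a'b' \le b'$ that hold in a semilattice but not in an arbitrary ordered magma. Everything else is a direct manipulation of the semilattice identities.
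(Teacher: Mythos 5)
Your proof is correct. The paper does not actually prove this lemma; it only refers the reader to Lemma~III-1.2 of \cite{GHKLMS}, so there is no internal argument to compare against, and your direct verification from the semilattice identities (monotonicity of the operation for the lower-bound part, and $a'b'\le a'$, $a'b'\le b'$ to pass from a lower bound of $AB$ to a lower bound of each factor) is exactly the standard one and is complete. The only tacit assumption is that $A$ and $B$ are non-empty (you fix some $b'\in B$), which is harmless and consistent with how the lemma is used throughout the paper.
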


A  semilattice $X$ is called {\em chain-continuous} if for any chain $C\subset X$ possessing $\sup C\in X$ we get $x\cdot\sup C=\sup xC$.

\begin{lemma}\label{l:sup-clo} A Hausdorff topological semilattice $X$ is chain-continuous if $\sup C\in\overline{{\downarrow}C}$ for any chain $C\subset X$ possessing $\sup C\in X$. Consequently, each $c$-complete Hausdorff topological semilattice is chain-continuous.
\end{lemma}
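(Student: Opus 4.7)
The plan is to verify the equation $x\cdot\sup C=\sup(xC)$ directly for any chain $C\subset X$ with $s:=\sup C\in X$ and any $x\in X$. First I would observe that $xs$ is an upper bound for $xC$: for each $c\in C$ we have $c\le s$, i.e.\ $cs=c$, whence $(xc)(xs)=x^2\cdot cs=xc$, so $xc\le xs$. The work is then to show $xs$ is the \emph{least} upper bound.

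So fix an arbitrary upper bound $y$ of $xC$ in $X$. I would use the standing hypothesis $s\in\overline{{\downarrow}C}$ together with continuity of the semilattice operation: left multiplication by $x$ is continuous, so $xs$ lies in the closure of $x\cdot{\downarrow}C$. Since any $d\in{\downarrow}C$ satisfies $d\le c$ for some $c\in C$, the same computation as above gives $xd\le xc$, so $x\cdot{\downarrow}C\subset{\downarrow}(xC)\subset{\downarrow}y$. The key topological input now is that in a Hausdorff topological semilattice the partial order is closed in $X\times X$, hence the principal lower set ${\downarrow}y=\{z\in X:(z,y)\in{\le}\}$ is closed in $X$. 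Therefore
\[
xs\in\overline{x\cdot{\downarrow}C}\subset\overline{{\downarrow}y}={\downarrow}y,
\]
which yields $xs\le y$ as required. Combining both directions, $xs=\sup(xC)$, so $X$ is chain-continuous.

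For the ``Consequently'' clause I would invoke $c$-completeness with the trivial closed upper set $F=X$: by Definition~\ref{d:complete}, every non-empty chain $C\subset X$ satisfies $\sup C\in\overline{{\downarrow}C}\subset X$. In particular, every chain whose supremum exists in $X$ (which is automatic here) meets the hypothesis of the lemma, so the first assertion applies and $X$ is chain-continuous.

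The only subtlety is bookkeeping: one must be careful that ``$\sup C\in\overline{{\downarrow}C}$'' is used with a supremum computed in $X$ rather than a purely topological limit, and that the closedness of ${\downarrow}y$ genuinely requires the Hausdorff hypothesis (through closedness of the partial order). Everything else is a short algebraic manipulation using idempotency $x^2=x$ and continuity of multiplication; no compactness or completeness beyond the stated one-line hypothesis is needed.
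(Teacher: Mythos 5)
Your proposal is correct and follows essentially the same route as the paper: show $x\cdot\sup C$ is an upper bound of $xC$, then use the hypothesis $\sup C\in\overline{{\downarrow}C}$ together with continuity of multiplication and the closedness of ${\downarrow}y$ (from closedness of the order in a Hausdorff topological semilattice) to conclude $x\cdot\sup C\le y$ for every upper bound $y$ of $xC$. Your writeup is in fact slightly more careful than the paper's (which abbreviates the passage from $\sup C\in\overline{{\downarrow}C}$ to $x\cdot\sup C\in\overline{{\downarrow}(xC)}$), and your derivation of the ``consequently'' clause from the case $F=X$ of $c$-completeness is exactly the intended one.
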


\begin{proof} Let $C\subset X$ be a chain possessing $\sup C\in X$. It is easy to see that for any $a\in X$ the product $a\cdot\sup C$ is an upper bound for the set $aC$. To show that $a\cdot\sup C=\sup aC$, we need to check that $a\cdot\sup C\le b$ for any upper bound $b\in X$ of the set $aC$ in $X$. Taking into account that the set ${\downarrow}b$ is a closed lower set in $X$, we conclude that $\sup aC\in \overline{{\downarrow}(aC)}\subset{\downarrow}b$ and hence $a\cdot\sup C\le b$ and $a\cdot\sup C=\sup aC$.
\end{proof}


For a subset $A$ of a poset $X$ by the {\em chain-closure} $\overleftrightarrow{A}$ of $A$ we understand the smallest chain-closed subset of $X$ that contain the set $A$. It is equal to the intersection of all chain-closed subsets of $X$ containing $A$.

\begin{lemma}\label{l:sup} Let $X$ be a chain-continuous semilattice. For any point $a\in X$ and set $B\subset X$ we get $$a\cdot\overleftrightarrow{B}\subset \overleftrightarrow{aB}.$$
\end{lemma}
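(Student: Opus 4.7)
The plan is a standard ``smallest closed set'' argument: introduce the witness set
$$A=\{b\in X:ab\in\overleftrightarrow{aB}\},$$
show that $B\subset A$, and show that $A$ is chain-closed in $X$; by minimality of the chain-closure, this forces $\overleftrightarrow{B}\subset A$, which is exactly the desired inclusion $a\cdot\overleftrightarrow{B}\subset\overleftrightarrow{aB}$.

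The inclusion $B\subset A$ is immediate, since for $b\in B$ we have $ab\in aB\subset\overleftrightarrow{aB}$. The heart of the argument is verifying that $A$ is chain-closed. Fix any chain $C\subset A$. Because the semilattice operation is order-preserving in each variable, the image $aC$ is again a chain in $X$, and since $C\subset A$, this chain lies in $\overleftrightarrow{aB}$.

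For the upper chain-closed part, assume $\sup C\in X$. Then chain-continuity gives $a\cdot\sup C=\sup(aC)$, so in particular $\sup(aC)$ exists in $X$. Since $\overleftrightarrow{aB}$ is chain-closed and $aC\subset\overleftrightarrow{aB}$, we conclude $\sup(aC)\in\overleftrightarrow{aB}$, i.e.\ $a\cdot\sup C\in\overleftrightarrow{aB}$, witnessing $\sup C\in A$. For the lower chain-closed part, assume $\inf C\in X$. Applying Lemma~\ref{l:inf-c} to the sets $\{a\}$ and $C$ yields $a\cdot\inf C=(\inf\{a\})\cdot(\inf C)=\inf(\{a\}\cdot C)=\inf(aC)$, so $\inf(aC)$ exists in $X$. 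Again by chain-closedness of $\overleftrightarrow{aB}$, we obtain $\inf(aC)\in\overleftrightarrow{aB}$, so $\inf C\in A$.

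Thus $A$ is chain-closed and contains $B$, and minimality of $\overleftrightarrow{B}$ yields $\overleftrightarrow{B}\subset A$, which is precisely $a\cdot\overleftrightarrow{B}\subset\overleftrightarrow{aB}$. No real obstacle is expected: the only point that requires the chain-continuity hypothesis (as opposed to the purely algebraic Lemma~\ref{l:inf-c}) is the suprema case, and one must only be careful to invoke the right identity on the two sides. If later a symmetric ``dual'' lemma for $\inf$ is needed, note that here it is free since meets are already well-behaved via Lemma~\ref{l:inf-c} in any semilattice.
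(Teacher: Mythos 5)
Your proposal is correct and follows essentially the same route as the paper: your witness set $A=\{b\in X:ab\in\overleftrightarrow{aB}\}$ is exactly the preimage $s_a^{-1}(\overleftrightarrow{aB})$ under the shift $s_a:x\mapsto ax$ used there, and the chain-closedness verification is the same (Lemma~\ref{l:inf-c} for infima, chain-continuity for suprema). Your writeup is in fact slightly more explicit than the paper, which handles the supremum case only ``by analogy.''
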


\begin{proof} Consider the shift $s_a:X\to X$, $s_a:x\mapsto ax$. We claim that the set $P=s_a^{-1}(\overleftrightarrow{aB})$ is chain-closed in $X$. Indeed, for any chain $C\subset P$ possessing $\inf C$,  Lemma~\ref{l:inf-c} implies that $a\cdot\inf C=\inf aC$. Since the set  $\overleftrightarrow{aB}\supset s_a(P)\supset s_a(C)=aC$ is chain-closed in $X$, $s_a(\inf C)=a\cdot\inf C=\inf aC\in
\overleftrightarrow{aB}$ and hence $\inf C\in P$.

By analogy we can prove that for any chain $C\subset P$ possessing $\sup C$ in $X$, we get $\sup C\in P$. This means that the set $P$ is chain-closed, contains $B$ and hence $\overleftrightarrow{B}\subset P$, which implies $a\overleftrightarrow{B}=s_a(\overleftrightarrow{B})\subset s_a(P)\subset\overleftrightarrow{aB}$.
\end{proof}

\begin{corollary}\label{c:sup} For any subsets $A,B\subset X$ of a chain-continuous semilattice $X$, we get $$\overleftrightarrow{A}\cdot\overleftrightarrow{B}\subset \overleftrightarrow{AB}.$$
\end{corollary}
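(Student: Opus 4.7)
The plan is to apply Lemma~\ref{l:sup} twice, first to absorb a single factor from $\overleftrightarrow{B}$ into the chain-closure, then to absorb a single factor from $\overleftrightarrow{A}$, using commutativity of the semilattice operation to switch roles. Throughout I will use the obvious monotonicity property of chain-closure: if $C\subset D$, then $\overleftrightarrow{C}\subset\overleftrightarrow{D}$, since $\overleftrightarrow{D}$ is a chain-closed set containing $C$, hence it contains the smallest such set.

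First, I would show that $A\cdot\overleftrightarrow{B}\subset\overleftrightarrow{AB}$. Indeed, for each $a\in A$, Lemma~\ref{l:sup} gives $a\cdot\overleftrightarrow{B}\subset\overleftrightarrow{aB}$, and since $aB\subset AB$, monotonicity yields $\overleftrightarrow{aB}\subset\overleftrightarrow{AB}$. Taking the union over $a\in A$ gives $A\cdot\overleftrightarrow{B}\subset\overleftrightarrow{AB}$.

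Next, fix any $b\in\overleftrightarrow{B}$. By commutativity, $bA=Ab\subset A\cdot\overleftrightarrow{B}\subset\overleftrightarrow{AB}$ from the previous step. Since $\overleftrightarrow{AB}$ is chain-closed and contains $bA$, monotonicity of the chain-closure gives $\overleftrightarrow{bA}\subset\overleftrightarrow{AB}$. Applying Lemma~\ref{l:sup} once more with the roles of point and set filled by $b$ and $A$, we obtain $b\cdot\overleftrightarrow{A}\subset\overleftrightarrow{bA}\subset\overleftrightarrow{AB}$. Since $b\in\overleftrightarrow{B}$ was arbitrary, we conclude $\overleftrightarrow{A}\cdot\overleftrightarrow{B}=\overleftrightarrow{B}\cdot\overleftrightarrow{A}\subset\overleftrightarrow{AB}$.

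There is no real obstacle here; the only subtlety is the bookkeeping about applying Lemma~\ref{l:sup} in the ``right'' variable at each step, which is exactly where commutativity of the semilattice operation is used to flip the point and the set.
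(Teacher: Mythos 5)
Your proof is correct and follows essentially the same route as the paper: both arguments first establish $A\cdot\overleftrightarrow{B}\subset\overleftrightarrow{AB}$ by applying Lemma~\ref{l:sup} pointwise over $a\in A$, and then apply Lemma~\ref{l:sup} a second time with a point $b\in\overleftrightarrow{B}$ against the set $A$, using commutativity and the monotonicity/idempotence of the chain-closure operator to land in $\overleftrightarrow{AB}$. The only difference is cosmetic: where the paper passes through $\overleftrightarrow{A\overleftrightarrow{B}}\subset\overleftrightarrow{\overleftrightarrow{AB}}=\overleftrightarrow{AB}$, you shortcut by observing directly that $bA\subset\overleftrightarrow{AB}$ forces $\overleftrightarrow{bA}\subset\overleftrightarrow{AB}$.
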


\begin{proof} For any point $a\in A$, by Lemma~\ref{l:sup}, we get $a\overleftrightarrow{B}\subset\overleftrightarrow{aB}\subset\overleftrightarrow{AB}$ and hence $A\overleftrightarrow{B}\subset\overleftrightarrow{AB}$.
Applying Lemma~\ref{l:sup} once more, we conclude that for every $b\in\overleftrightarrow{B}$, we get
$$\overleftrightarrow{A}b\subset \overleftrightarrow{Ab}\subset \overleftrightarrow{A\overleftrightarrow{B}}\subset
\overleftrightarrow{\overleftrightarrow{AB}}=\overleftrightarrow{AB}$$and hence $\overleftrightarrow{A}\cdot\overleftrightarrow{B}\subset\overleftrightarrow{AB}$.
\end{proof}

\section{Countable chain-closures of sets in semilattices}

For a subset $A$ of a chain-complete poset $X$ let $$
\begin{aligned}
&\bar\downarrow A=\{\inf C:\mbox{$C$ is a countable chain in $A$}\},\\
&\bar\uparrow A=\{\sup C:\mbox{$C$ is a countable chain in $A$}\}, \mbox{ and}\\
&\bar{\uparrow}\bar{\downarrow}A=\bar{\uparrow}({\bar\downarrow}A).
\end{aligned}
$$

\begin{lemma}\label{l:cccl} For any subsets $A,B\subset G$ of a semilattice $X$ we get $\bar\downarrow A\cdot\bar\downarrow B\subset\bar\downarrow(AB)$. If the semilattice $X$ is chain-continuous, then $\bar\uparrow A\cdot\bar\uparrow B\subset\bar\uparrow(AB)$.
\end{lemma}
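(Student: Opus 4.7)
My approach is a diagonal construction: given $p \in \bar\downarrow A$ and $q \in \bar\downarrow B$ (respectively $p \in \bar\uparrow A$ and $q \in \bar\uparrow B$) witnessed by countable chains $C_A \subset A$ and $C_B \subset B$, I will build a single countable chain $\{c_n\}_{n \in \omega} \subset AB$ whose infimum (resp.\ supremum) is $pq$. The naive attempt of taking the grid $C_A \cdot C_B$ does not work because this product is typically not linearly ordered, so a diagonal extraction is forced.

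For the first inclusion, write $p = \inf C_A$, $q = \inf C_B$ and enumerate $C_A = \{a_n : n \in \omega\}$, $C_B = \{b_n : n \in \omega\}$. I would set $a'_n = a_0 a_1 \cdots a_n$ and $b'_n = b_0 b_1 \cdots b_n$. Because the semilattice operation restricted to a chain coincides with the order-theoretic minimum, $a'_n \in C_A$ and $b'_n \in C_B$, so $c_n := a'_n b'_n \in AB$; moreover $\{c_n\}$ is decreasing, hence a countable chain. The cofinality estimate $a'_k b'_k \le a'_m b'_n$ for $k = \max(m,n)$ (a consequence of monotonicity of multiplication in a semilattice) yields $\inf_n c_n = \inf_{m,n} a'_m b'_n$, and Lemma~\ref{l:inf-c} identifies the latter with $(\inf_n a'_n)(\inf_n b'_n) = pq$, so $pq \in \bar\downarrow(AB)$.

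For the second inclusion, under chain-continuity, the same strategy works with one essential change: the semilattice operation does not compute suprema, so I replace the partial products by $a'_n = \max\{a_0, \ldots, a_n\}$, which exists and lies in $C_A$ because finitely many comparable elements have a maximum among themselves; likewise for $b'_n$. Now $\{a'_n\}$ and $\{b'_n\}$ are increasing chains with suprema $p$ and $q$, and the diagonal $c_n = a'_n b'_n$ is an increasing chain in $AB$ bounded above by $pq$. To see $\sup_n c_n = pq$, I fix an arbitrary upper bound $v$ of $\{c_n\}$ and, for each fixed $n$, use $c_k \ge a'_k b'_n$ for $k \ge n$ together with chain-continuity applied to the chain $\{a'_k\}_{k \ge n}$ to conclude $v \ge p \cdot b'_n$; a second application of chain-continuity to the chain $\{b'_n\}$ then gives $v \ge pq$, so $pq = \sup_n c_n \in \bar\uparrow(AB)$.

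The main obstacle is bookkeeping with the two-parameter family $\{a'_m b'_n\}_{m,n}$: its infimum is handed to us by Lemma~\ref{l:inf-c}, but its supremum is not, and instead requires two separate invocations of chain-continuity in the second part; the cofinality observation $a'_k b'_k \le a'_m b'_n$ (or its reverse, in the sup case) is what lets the diagonal $\{a'_n b'_n\}_n$ attain the same extremum as the full grid.
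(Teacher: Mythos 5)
Your proof is correct and follows essentially the same route as the paper's: replace the countable chains by monotone sequences of partial minima (resp.\ maxima), take the diagonal products $a'_nb'_n$, and identify the extremum of the diagonal with $pq$ via the cofinality observation together with Lemma~\ref{l:inf-c} (resp.\ chain-continuity). The only differences are cosmetic: you apply Lemma~\ref{l:inf-c} once to the full grid where the paper iterates it, and you write out the supremum case that the paper dismisses as ``by analogy''.
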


\begin{proof} Given two points $a\in\bar\downarrow A$ and $b\in\bar\downarrow B$, find countable chains $\vec A\subset A$ and $\vec B\subset B$ such that $a=\inf \vec A$ and $b=\inf \vec B$. Since $\vec A$ and $\vec B$ are countable, we can find decreasing sequences $\{a_n\}_{n\in\w}\subset \vec A$ and $\{b_n\}_{n\in\w}\subset \vec B$ such that $a=\inf\{a_n\}_{n\in\w}$ and $b=\inf\{b_n\}_{n\in\w}$. It follows that $\{a_nb_n\}_{n\in\w}$ is a decreasing sequence in $AB$. Moreover, for any $m,k\in\w$ there exists $n\in\w$ such that $a_nb_n\le a_mb_k$, which implies that $ab\le \inf\{a_nb_n\}_{n\in\w}=\inf\{a_nb_k\}_{n,k\in\w}=\inf (\vec A \vec B)$.

 By Lemma~\ref{l:inf-c}, for every $n\in\w$, $$a_nb=a_n\cdot\inf\{b_k\}_{k\in\w}=\inf\{a_nb_k\}_{k\in\w}\ge \inf\{a_mb_k\}_{m,k\in\w}=\inf(\vec A\vec B).$$ Applying the Lemma~\ref{l:inf-c} once more, we get
 $ab=\inf_{n\in\w}a_nb\ge \inf (\vec A\vec B)$ and hence $$ab=\inf (\vec A\vec B)=\inf\{a_nb_n\}_{n\in\w}\in \bar\downarrow{AB}.$$
 By analogy we can prove that the chain-continuity of $X$ implies $\bar\uparrow A\cdot\bar\uparrow B\subset\bar\uparrow(AB)$.
 \end{proof}

\begin{lemma}\label{l:w} Let $\{V_n\}_{n\in\w}$ be a decreasing  sequence of non-empty sets in a chain-continuous chain-complete semilattice $X$ such that $V_{n+1}V_{n+1}\subset V_n$ for all $n\in\w$. Then the set $L=\bigcap_{n\in\w}{\bar{\uparrow}\bar\downarrow V_n}$ is a non-empty subsemilattice of $X$.
\end{lemma}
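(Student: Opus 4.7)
The plan is to exhibit an explicit element of $L$ by taking a suitable iterated product of selected $v_n \in V_n$, and to deduce closure of $L$ under the semilattice operation directly from Lemma~\ref{l:cccl}. A preliminary observation: idempotency together with $V_{n+1}V_{n+1} \subset V_n$ forces $V_{n+1} \subset V_n$, since for any $v \in V_{n+1}$ we have $v = vv \in V_{n+1}V_{n+1} \subset V_n$. So the sets nest decreasingly, and in particular any $v_{n+k}$ picked from $V_{n+k}$ already belongs to $V_{n+1}$.

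To produce a witness for $L \ne \emptyset$, I would pick $v_n \in V_n$ for $n \ge 1$ and form the finite products
\[
w_{n,m} = v_{n+1} v_{n+2} \cdots v_{n+m}.
\]
An induction on $m$ shows $w_{n,m} \in V_n$ for all $m \ge 2$: the base case uses $w_{n,2}=v_{n+1}v_{n+2}\in V_{n+1}\cdot V_{n+1}\subset V_n$, and the inductive step uses $w_{n,m+1}=v_{n+1}\cdot w_{n+1,m}\in V_{n+1}\cdot V_{n+1}\subset V_n$ together with the hypothesis applied at level $n+1$. Since $\{w_{n,m}\}_{m\ge 2}$ is a decreasing countable chain inside $V_n$, chain-completeness yields $w_n := \inf_m w_{n,m} \in \bar\downarrow V_n$.

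The key compatibility now comes from Lemma~\ref{l:inf-c} applied to $\{v_{n+1}\}$ and $\{w_{n+1,m}\}_m$: this gives $v_{n+1}\cdot w_{n+1} = \inf_m v_{n+1}w_{n+1,m} = \inf_m w_{n,m+1} = w_n$, so $w_n \le w_{n+1}$. Hence $\{w_n\}_{n\in\w}$ is an increasing chain and its supremum $w\in X$ exists by chain-completeness. Because $\bar\downarrow V_m \subset \bar\downarrow V_n$ whenever $m \ge n$, the tail $\{w_m\}_{m\ge n}$ is a countable chain inside $\bar\downarrow V_n$ whose supremum is still $w$. Therefore $w\in\bar\uparrow\bar\downarrow V_n$ for every $n$, placing $w$ in $L$.

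For the subsemilattice property, given $a,b\in L$ and an arbitrary $n$, both $a$ and $b$ lie in $\bar\uparrow\bar\downarrow V_{n+1}$; two successive applications of Lemma~\ref{l:cccl} (the $\bar\uparrow$-version invokes chain-continuity, which holds by hypothesis) together with the monotonicity of $\bar\downarrow$ and $\bar\uparrow$ give
\[
ab \in \bar\uparrow\bigl(\bar\downarrow V_{n+1}\cdot\bar\downarrow V_{n+1}\bigr) \subset \bar\uparrow\bar\downarrow(V_{n+1}V_{n+1}) \subset \bar\uparrow\bar\downarrow V_n,
\]
so $ab \in L$. The main obstacle is non-emptiness: one must synthesize \emph{descending} infima (to stay inside each fixed $V_n$) with an \emph{ascending} supremum (to link across all $n$), and Lemma~\ref{l:inf-c} is exactly what converts the shift $w_{n,m+1}=v_{n+1}w_{n+1,m}$ into the monotonicity relation $w_n=v_{n+1}w_{n+1}\le w_{n+1}$ that makes this synthesis possible.
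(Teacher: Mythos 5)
Your proof is correct and follows essentially the same strategy as the paper's: closure under products via Lemma~\ref{l:cccl}, and non-emptiness by forming iterated products, taking infima of their descending tails to get an increasing chain in the sets $\bar\downarrow V_n$, and then taking its supremum. The only (cosmetic) difference is that you derive the monotonicity $w_n\le w_{n+1}$ from the identity $w_n=v_{n+1}w_{n+1}$ via Lemma~\ref{l:inf-c}, whereas the paper gets the same inequality by comparing the finite products $v_n\cdots v_m\le v_k\cdots v_m$ directly before passing to infima.
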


\begin{proof}
To see that the set $L$ is a subsemilattice of $X$, observe that for every positive integer $n$, Lemma~\ref{l:cccl} guarantees that $$LL\subset (\bar\uparrow\bar\downarrow V_n)\cdot(\bar\uparrow\bar\downarrow V_n)
\subset \bar\uparrow(\bar\downarrow V_n\cdot\bar\downarrow V_n)\subset\bar\uparrow\bar\downarrow(V_nV_n)\subset\bar\uparrow\bar\downarrow V_{n-1}$$ and hence, $LL\subset\bigcap_{n\in\w}\bar\uparrow\bar\downarrow V_{n}=L$, which means that $L$ is a subsemilattice of $X$.
\smallskip

To see that $L$ is not empty, for every $n\in\w$ choose a point $v_n\in V_{n+1}$. For any numbers $n<m$ consider the product $v_n\cdots v_m\in X$. Using the inclusions $V_kV_k\subset V_{k-1}$ for $k\in\{m,m-1,\dots,n+1\}$, we can show that $v_n\cdots v_m\in V_n$. Observe that for every $n\in\w$ the sequence $(v_{n}\cdots v_m)_{m>n}$ is decreasing and by the chain-completeness of $X$, this chain has the greatest lower bound $\bar v_n:=\inf_{m>n}v_n\cdots v_m$ in $X$, which belongs to $\bar\downarrow{V_n}$.

Taking into account that $v_n\cdots v_m\le v_k\cdots v_m$ for any $n\le k<m$, we conclude that $\bar v_n\le \bar v_k$ for any $n\le k$. By the chain-completeness of $X$ the chain $\{\bar v_n\}_{n\in\w}$ has the least upper bound $\bar v=\sup\{\bar v_n\}_{n\in\w}$ in $X$. Since for every $k\in\w$ the chain $\{\bar v_n\}_{n>k}$ is contained in $\bar\downarrow{V_k}$, the least upper bound $\bar v=\sup\{\bar v_n\}_{n\in\w}=\sup\{\bar v_n\}_{n\ge k}$ belongs to $\bar\uparrow\bar\downarrow{V_k}$ for all $k\in\w$. Then $\bar v\in\bigcap_{n\in\w}\bar\uparrow\bar\downarrow{V_n}=L$, so $L\ne\emptyset$.
\end{proof}

\section{The Key Lemma}

In this section we shall prove a Key Lemma for the proof of Theorem~\ref{main}.

\begin{lemma}\label{l:super} Let $h:X\to Y$ be a homomorphism of topological semilattices. The subsemilattice $S=h(X)\subset Y$ is closed in $Y$ if the following conditions are satisfied:
\begin{enumerate}
\item $X$ is chain-complete and chain-continuous;
\item for any $y\in \bar S\setminus S\subset Y$ and $x\in X$ there exists a sequence $(U_n)_{n\in\w}$ of neighborhoods of $y$ in $Y$ such that such that the point $x$ does not belong to the chain-closure $\overleftrightarrow{U}$ of the set $U=\bigcap_{n\in\w}\bar{\uparrow}\bar{\downarrow}h^{-1}(U_n)$ in $X$.
\end{enumerate}
\end{lemma}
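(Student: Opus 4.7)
The plan is to argue by contradiction. Assume there exists $y\in\overline{S}\setminus S$. Since $Y$ is a topological semilattice, $y=y^2$, and the continuity of the multiplication on $Y$ at $(y,y)$ lets me construct recursively a decreasing sequence $(W_n)_{n\in\w}$ of open neighborhoods of $y$ with $W_{n+1}W_{n+1}\subseteq W_n$. Put $V_n:=h^{-1}(W_n)\subseteq X$: these sets are non-empty (as $y\in\overline S$), decreasing, and satisfy $V_{n+1}V_{n+1}\subseteq V_n$ because $h$ is a homomorphism. By assumption~(1), $X$ is chain-complete and chain-continuous, so Lemma~\ref{l:w} applies to the sequence $(V_n)$ and yields a non-empty subsemilattice $L:=\bigcap_{n\in\w}\bar{\uparrow}\bar{\downarrow}V_n$ of $X$.

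Next I take $x\in L$ and invoke condition~(2) for the pair $(y,x)$, obtaining a sequence $(U_n)_{n\in\w}$ of open neighborhoods of $y$ with $x\notin\overleftrightarrow{U}$, where $U:=\bigcap_{n\in\w}\bar{\uparrow}\bar{\downarrow}h^{-1}(U_n)$. To extract a contradiction, the natural move is to refine the neighborhoods: produce $(W_n')$ with $W_n'\subseteq W_n\cap U_n$ and $W_{n+1}'W_{n+1}'\subseteq W_n'$ (possible again from $y=y^2$ and continuity), apply Lemma~\ref{l:w} to $V_n':=h^{-1}(W_n')$, and obtain a non-empty subsemilattice $L':=\bigcap_n\bar{\uparrow}\bar{\downarrow}V_n'$ contained in $L$ (since $W_n'\subseteq W_n$) and in $U$ (since $W_n'\subseteq U_n$), hence in $\overleftrightarrow{U}$.

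The main obstacle is that a priori the originally chosen $x\in L$ need not lie in the refined $L'\subseteq\overleftrightarrow{U}$, so the inclusion $L'\subseteq\overleftrightarrow{U}$ does not immediately contradict $x\notin\overleftrightarrow{U}$. I expect to overcome this by exploiting the constructive description of elements of $L$ in the proof of Lemma~\ref{l:w}: every $\bar v\in L$ is of the form $\sup_n\inf_{m>n}v_n\cdots v_m$ for a choice of $v_n\in V_{n+1}$, so by choosing the generating $v_n$'s inside the smaller sets $V_{n+1}'\subseteq V_{n+1}$ (still non-empty since $y\in\overline S$) one obtains a single point that lies in both $L$ and $L'$. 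Taking $x:=\bar v$ of this form from the outset — synchronised, perhaps via a fixed-point or diagonal argument, with the sequence $(U_n)$ supplied by (2) — forces $x\in L'\subseteq\overleftrightarrow{U}$, contradicting (2). Hence $\overline S\setminus S=\emptyset$, i.e.\ $S$ is closed in $Y$.
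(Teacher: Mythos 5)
Your setup is correct and matches the paper's opening moves (the sequence $(W_n)$ with $W_{n+1}W_{n+1}\subseteq W_n$, the preimages $V_n$, and the appeal to Lemma~\ref{l:w} to get a non-empty $L=\bigcap_{n\in\w}\bar\uparrow\bar\downarrow V_n$). But the step you yourself flag as the ``main obstacle'' is a genuine gap, and the fix you sketch does not close it. The difficulty is not merely that your original $x\in L$ might miss the refined $L'$; it is that condition~(2) assigns to \emph{each} point $x$ its \emph{own} sequence $(U_{x,n})_{n\in\w}$, so a contradiction requires producing a point $z$ lying in the chain-closure $\overleftrightarrow{\bigcap_{n}\bar\uparrow\bar\downarrow h^{-1}(U_{z,n})}$ built from the sequence attached to $z$ itself. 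A single refinement only places some new point $x'\in L'\subseteq\overleftrightarrow{U_x}$ for the \emph{old} point $x$; condition~(2) applied to $x'$ returns a fresh sequence, and nothing forces $x'$ into its own chain-closure. Iterating and diagonalizing handles countably many stages, but the new witness produced at stage $\w$ again has its own sequence, so the process must be continued transfinitely, and there is no fixed-point principle available here: the assignment $x\mapsto(U_{x,n})_n$ is completely arbitrary.

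The paper resolves this by proving, by transfinite induction on $\kappa$, that for \emph{every} family $\{U_{\alpha,n}\}_{(\alpha,n)\in\kappa\times\w}$ of neighborhoods of $y$ the intersection $\bigcap_{\alpha\in\kappa}\overleftrightarrow{\bigcap_{n\in\w}\bar\uparrow\bar\downarrow h^{-1}(U_{\alpha,n})}$ is non-empty; taking $\kappa=|X|$ and the family given by condition~(2) then yields a point lying in its own forbidden chain-closure. The substance of that induction is exactly what your sketch omits: one needs Corollary~\ref{c:sup} (hence chain-continuity) to see that each $\overleftrightarrow{L_\alpha}$ is a chain-closed \emph{subsemilattice}, so that the nested intersections $L_{<\beta}$ are chain-closed subsemilattices; chain-completeness then produces a smallest element $x_\beta$ of each $L_{<\beta}$ (as the infimum of a maximal chain), and the supremum of the increasing transfinite chain $\{x_\beta\}_{\beta<\kappa}$ survives into the full intersection at the limit. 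Without this mechanism for passing through limit stages of uncountable cofinality, the ``synchronisation'' you propose cannot be carried out, so the proof as sketched is incomplete.
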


\begin{proof} Given any point $y\in \bar S$, we should prove that $y\in S$. To derive a contradiction, assume that $y\notin S$. By transfinite induction, for every non-zero cardinal $\kappa$ we shall prove the following statement:
\begin{itemize}
\item[$(*_\kappa)$] {\em for every family $\{U_{\alpha,n}\}_{(\alpha,n)\in\kappa\times \w}$ of neighborhoods of $y$ in $Y$, the set $\bigcap_{\alpha\in\kappa}\overleftrightarrow{\bigcap_{n\in\w}\bar\uparrow\bar\downarrow h^{-1}(U_{\alpha,n})}$ is not empty.}
\end{itemize}
\smallskip

To prove this statement for the smallest infinite cardinal $\kappa=\w$, fix an arbitrary double sequence $(U_{\alpha,n})_{(\alpha,n)\in\w\times\w}$ of neighborhoods of $y$. Using the continuity of the semilattice operation at $y$, construct a decreasing sequence $(W_n)_{n\in\w}$ of neighborhoods of $y$ such that $W_n\subset \bigcap_{\alpha+k\le n}U_{\alpha,k}$ and $W_{n+1}W_{n+1}\subset W_n$ for all $n\in\w$. Then the preimages $V_n=h^{-1}(W_n)$, $n\in\w$, form a decreasing sequence $(V_n)_{n\in\w}$ of non-empty sets in $X$ such that $V_nV_n\subset V_{n-1}$ for all $n>0$. By Lemma~\ref{l:w}, the set $\bigcap_{n\in\w}\bar\uparrow\bar\downarrow V_n$ is not empty.
Then the set
$$
\bigcap_{\alpha\in\w}\overleftrightarrow
{\bigcap_{n\in\w}\bar\uparrow\bar\downarrow h^{-1}(U_{\alpha,n})}\supset\bigcap_{\alpha\in\w}\bigcap_{n\in\w}
\bar\uparrow\bar\downarrow h^{-1}(U_{\alpha,n})\supset\bigcap_{n\in\w}\bar\uparrow\bar\downarrow h^{-1}(W_n)=\bigcap_{n\in\w}\bar\uparrow\bar\downarrow V_n$$
is not empty as well.

Now assume that for some infinite cardinal $\kappa$ and all cardinals $\lambda<\kappa$ the statement $(*_\lambda)$ has been proved. To prove the statement $(*_\kappa)$, fix any family $(U_{\alpha,n})_{(\alpha,n)\in\kappa\times\w}$ of neighborhoods of $y$ in the topological semilattice $Y$.

Using the continuity of the semilattice operation on $Y$,
 for every $\alpha\in\kappa$ choose a decreasing sequence $(W_{\alpha,n})_{n\in\w}$ of neighborhoods of $y$ such that
$W_{\alpha,n}\subset U_{\alpha,n}$ and $W_{\alpha,n+1}^2\subset W_{\alpha,n}$ for all $n\in\w$. By Lemma~\ref{l:w}, the intersection  $L_\alpha=\bigcap_{n\in\w}\bar\uparrow\bar\downarrow{h^{-1}(W_{\alpha,n})}
\subset\bigcap_{n\in\w}\bar\uparrow\bar\downarrow{h^{-1}(U_{\alpha,n})}$ is a non-empty  subsemilattice in $X$. Lemma~\ref{c:sup} implies that
$\overleftrightarrow{L_\alpha}\cdot\overleftrightarrow{L_\alpha}\subset\overleftrightarrow{L_\alpha L_\alpha}=\overleftrightarrow{L_\alpha}$, which means that $\overleftrightarrow{L_\alpha}$ is a chain-closed subsemilattice in $X$.
Then for every $\beta\in\kappa$ the intersection $$L_{<\beta}=\bigcap_{\alpha<\beta}\overleftrightarrow{L_\alpha}=
\bigcap_{\alpha<\beta}\overleftrightarrow{\bigcap_{n\in\w}
\bar\uparrow\bar\downarrow h^{-1}(W_{\alpha,n})}$$ is a chain-closed subsemilattice of $X$. By the inductive assumption $(*_{|\beta\times\w|})$, the semilattice $L_{<\beta}$ is not empty.

Choose any maximal chain $M$ in $L_{<\beta}$. The chain-completeness of $X$ guarantees that $M$ has $\inf M\in X$. Taking into account that $L_{<\alpha}$ is chain-closed in $X$, we conclude that $\inf M\in L_{<\alpha}$. We claim that $x_\alpha:=\inf M$ is the smallest element of the semilattice $L_{<\alpha}$. In the opposite case, we could find an element $z\in L_{<\alpha}$ such that $x_\alpha\not\le z$ and hence $x_\alpha z<x_\alpha$. Then $\{x_\alpha z\}\cup M$ is a chain in $L_{<\alpha}$ that properly contains the maximal chain $M$, which is not possible. This contradiction shows that $x_\alpha=\inf M$ is the smallest element of the semilattice $L_{<\alpha}$.

Observe that for any ordinals $\alpha<\beta<\kappa$ the inclusion $L_{<\beta}\subset L_{<\alpha}$ implies $x_\alpha\le x_\beta$. So, $\{x_\beta\}_{\beta\in\kappa}$ is a chain in $X$ and by the chain-completeness of $X$, it has $\sup\{x_\beta\}_{\beta\in\kappa}\in \bigcap_{\beta<\kappa}L_{<\beta}\subset\bigcap_{\alpha\in\kappa}
\overleftrightarrow{\bigcap_{n\in\w}\bar\uparrow\bar\downarrow h^{-1}(U_{\alpha,n})}$. So, the latter set is not empty and the statement $(*_\kappa)$ is proved.
\smallskip

By condition (2), for every point $x\in X$ there exists a sequence $(U_{x,n})_{n\in\w}$ of open neighborhoods of $y$ in $Y$ such  that $x\notin\overleftrightarrow{\bigcap_{n\in\w}\bar\uparrow\bar\downarrow h^{-1}(U_{x,n})}$. Then the set $\bigcap_{x\in X}\overleftrightarrow{\bigcap_{n\in\w}\bar\uparrow\bar\downarrow h^{-1}(U_{x,n})}$ is empty, which contradicts the property $(*_\kappa)$ for $\kappa=|X|$.
\end{proof}

\section{Well-separated semilattices}\label{s:well}

We recall that a topological semilattice $X$ is {\em well-separated} if for any points $x<y$ in $X$ there exists a neighborhood $U\subset X$ of $y$ such that $x\notin{\uparrow}\overline{{\uparrow}\overline{{\uparrow} U}}$.

\begin{lemma}\label{l:well} A Hausdorff topological semilattice $X$ is well-separated if one of the following conditions is satisfied:
\begin{enumerate}
\item $X$ admits a continuous injective homomorphism into a well-separated topological semilattice;
\item the pospace $X$ is down-open;
\item $X$ is a topological lattice.
\end{enumerate}
\end{lemma}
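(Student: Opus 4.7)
The plan begins with the observation that Corollary~\ref{c:l=>do} already gives the implication $(3)\Ra(2)$, so only conditions (1) and (2) require genuine arguments. In both cases the strategy is the same: identify one explicit neighborhood of $y$ and then verify the triple-iterated condition $x\notin{\uparrow}\overline{{\uparrow}\overline{{\uparrow}V}}$ by routing each of the three alternating ``take closure''/``take upper set'' steps through a single already-established property.

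For (1), let $f:X\to Z$ be a continuous injective homomorphism into a well-separated Hausdorff topological semilattice $Z$, and fix $x<y$ in $X$. The first step is to check that $f(x)<f(y)$: the equality $f(x)f(y)=f(xy)=f(x)$ gives $f(x)\le f(y)$, and injectivity together with $x\ne y$ forbids equality. Well-separation of $Z$ at $f(x)<f(y)$ supplies a neighborhood $V\subset Z$ of $f(y)$ with $f(x)\notin{\uparrow}\overline{{\uparrow}\overline{{\uparrow}V}}$, and I take $U=f^{-1}(V)$. The verification uses two trivial facts: $f({\uparrow}A)\subset{\uparrow}f(A)$ (a semilattice homomorphism is order-preserving) and $f(\overline{A})\subset\overline{f(A)}$ (continuity of $f$). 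Alternating these three times gives $f({\uparrow}\overline{{\uparrow}\overline{{\uparrow}U}})\subset{\uparrow}\overline{{\uparrow}\overline{{\uparrow}V}}$, so any $x\in{\uparrow}\overline{{\uparrow}\overline{{\uparrow}U}}$ would contradict the choice of $V$.

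For (2), assume that $X$ is down-open and take $x<y$ in $X$; since $y\not\le x$, Lemma~\ref{l:neib} produces open neighborhoods $V_y\ni y$ and $V_x\ni x$ with ${\uparrow}V_y\cap{\downarrow}V_x=\emptyset$. Down-openness now enters decisively: ${\downarrow}V_x$ is open, hence $F:=X\setminus{\downarrow}V_x$ is a \emph{closed upper} set, and the disjointness from the previous step places ${\uparrow}V_y\subset F$. Any closed upper set is absorbed by both operators $\overline{(\cdot)}$ and ${\uparrow}(\cdot)$, so iterating yields ${\uparrow}\overline{{\uparrow}\overline{{\uparrow}V_y}}\subset F$; since $x\in V_x\subset{\downarrow}V_x$ lies outside $F$, the neighborhood $V_y$ witnesses well-separation at the pair $(x,y)$.

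I do not anticipate a substantive obstacle: once Lemma~\ref{l:neib} and Corollary~\ref{c:l=>do} are in hand, the only care needed is to organize the three-fold alternation of ``closure'' and ``upper set'' in each case, and in each step the justification is a single clean property (order-preservation plus continuity for (1); closed-upper absorption together with down-openness for (2)).
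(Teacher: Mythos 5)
Your proposal is correct and follows essentially the same route as the paper's proof: part (1) pushes the witnessing neighborhood back through $f$ using continuity and order-preservation, part (2) uses Lemma~\ref{l:neib} plus down-openness to produce the closed upper set $F=X\setminus{\downarrow}V_x$ absorbed by the operators, and part (3) reduces to (2) via Corollary~\ref{c:l=>do}. No gaps.
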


\begin{proof}
1. Assume that $h:X\to Y$ is a continuous injective homomorphism of $X$ into a well-separated topological semilattice $Y$. Given any points $x,y\in X$ with $x<y$, consider their images $\bar x=h(x)$ and $\bar y= h(y)$ and observe that $\bar x<\bar y$ (by the injectivity of $h$). Since $Y$ is well-separated, the point $\bar y$ has a neighborhood $V\subset Y$ such that $\bar x\notin{\uparrow}\overline{{\uparrow}\overline{{\uparrow}V}}$. By the continuity and monotonicity of $h$, the neighborhood $U:=h^{-1}(V)$ of $y$ witnesses that $X$ is well-separated as $x\notin {\uparrow}\overline{{\uparrow}\overline{{\uparrow}U}}$.
\smallskip

2. Assume that the lower set of any open set in $X$ is open. To show that $X$ is well-separated, take any points $x,y\in X$ with $x<y$. By Lemma~\ref{l:neib}, the points $x,y$ have open neighborhoods $V_x,V_y$ in $X$ such that ${\downarrow}V_x\cap{\uparrow}V_y=\emptyset$. By our assumption, the lower set ${\downarrow}V_x$ is open in $X$ and  its complement $F=X\setminus{\downarrow}V_x$ is a closed upper set. Then for the neighborhood $V_y\subset F$ of $y$ we get $${\uparrow}\overline{{\uparrow}\overline{{\uparrow}V_y}}\subset
{\uparrow}\overline{{\uparrow}\overline{{\uparrow}F}}=F\subset X\setminus\{x\}.$$
\smallskip

3. If $X$ is a topological lattice, then by Corollary~\ref{c:l=>do}, $X$ is down-open and by the preceding item, the topological semilattice $X$ is well-separated.
\end{proof}

\begin{problem} Find an example of a Hausdorff topological semilattice $X$ which is not well-separated.
\end{problem}

\begin{problem} Is each regular topological semilattice well-separated?
\end{problem}

\section{A Main Technical Result}\label{s:tech}


\begin{theorem}\label{t:main}
For a continuous homomorphism $h:X\to Y$ from a $c$-complete topological semilattice $X$ to a Hausdorff topological semigroup $Y$, the image $h(X)$ is closed in $Y$ if one of the following conditions is satisfied:
\begin{enumerate}
\item $X$ or $Y$ is a down-open topological semilattice;
\item $\overleftrightarrow{U}\subset\overline{{\uparrow}U}$ for any open set $U\subset X$;
\item $Y$ is well-separated topological semilattice and $\overleftrightarrow{S}\subset{\uparrow}\overline{{\uparrow}S}$ for any subsemilattice $S\subset X$.
\end{enumerate}
\end{theorem}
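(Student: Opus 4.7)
The plan is to apply the Key Lemma~\ref{l:super} in each of the three cases. Its first hypothesis is immediate: $c$-completeness of $X$ gives chain-completeness by Definition~\ref{d:complete}, and Lemma~\ref{l:sup-clo} then yields chain-continuity. To verify the second hypothesis, I fix $y\in\overline{h(X)}\setminus h(X)$ and $x\in X$ and must produce a sequence $(U_n)_{n\in\w}$ of neighborhoods of $y$ in $Y$ such that $x$ lies outside the chain-closure $\overleftrightarrow{W}$ of $W:=\bigcap_n\bar\uparrow\bar\downarrow h^{-1}(U_n)$. Using continuity of the semigroup operation at $y$, I arrange the $U_n$ decreasingly with $U_{n+1}^2\subset U_n$; by Lemma~\ref{l:w} the set $W$ is then a nonempty subsemilattice of $X$, and $W\subset\bar\uparrow\bar\downarrow h^{-1}(U_0)\subset\overleftrightarrow{h^{-1}(U_0)}$, whence $\overleftrightarrow{W}\subset\overleftrightarrow{h^{-1}(U_0)}$.

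In case~(2), the hypothesis gives $\overleftrightarrow{h^{-1}(U_0)}\subset\overline{{\uparrow}h^{-1}(U_0)}$, so it suffices to find an open $U\ni y$ in $Y$ with $x\notin\overline{{\uparrow}h^{-1}(U)}$. The closure $\overline{h(X)}$ is a Hausdorff topological semilattice (the idempotent commutative structure extends from $h(X)$), hence a pospace. Applying Lemma~\ref{l:neib} in $\overline{h(X)}$ (in the case $y\not\le h(x)$) I obtain open neighborhoods $V_y,V_x$ of $y,h(x)$ with ${\uparrow}V_y\cap{\downarrow}V_x=\emptyset$; then any $z\in h^{-1}(V_x)\cap{\uparrow}h^{-1}(V_y)$ would give $h(z)\in V_x\cap{\uparrow}V_y\subset{\downarrow}V_x\cap{\uparrow}V_y=\emptyset$, so $h^{-1}(V_x)$ is an open neighborhood of $x$ disjoint from ${\uparrow}h^{-1}(V_y)$, yielding $x\notin\overline{{\uparrow}h^{-1}(V_y)}$. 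In case~(3), since $W$ is a subsemilattice, the hypothesis gives $\overleftrightarrow{W}\subset{\uparrow}\overline{{\uparrow}W}$, so it suffices to show $x\notin{\uparrow}\overline{{\uparrow}W}$; for this I use the well-separation of $Y$ (and hence of $\overline{h(X)}$ by Lemma~\ref{l:well}(1)) to pick a neighborhood $V\ni y$ witnessing $h(x)\notin{\uparrow}\overline{{\uparrow}\overline{{\uparrow}V}}$ (with Lemma~\ref{l:neib} handling the incomparable case) and shrink the $U_n$ into $V$; the three iterated upper-closures in the well-separation axiom are calibrated to absorb the two $\bar\uparrow\bar\downarrow$-operations defining $W$ together with the outer upper-closure.

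Case~(1) reduces to (2) or (3) via Lemma~\ref{l:well}. If $X$ is down-open, I verify that for every open $V\subset X$ the closure $\overline{{\uparrow}V}$ is itself an upper set: if $a\in\overline{{\uparrow}V}$, $a\le b$, and $b$ had an open neighborhood $O$ missing ${\uparrow}V$, then ${\downarrow}O$ would be an open neighborhood of $a$ (by down-openness) meeting ${\uparrow}V$ at some $z\le o\in O$, $z\ge v\in V$, forcing $o\ge v$ and $o\in O\cap{\uparrow}V$, a contradiction. Hence $\overline{{\uparrow}V}$ is a closed upper set in the $c$-complete $X$, chain-closed by Lemma~\ref{c:fid=>cc}, giving $\overleftrightarrow{V}\subset\overline{{\uparrow}V}$ and reducing to case~(2). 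If $Y$ is down-open then $Y$ is well-separated by Lemma~\ref{l:well}(2), and the same upper-set argument inside $\overline{h(X)}\subset Y$ supplies the subsemilattice inclusion required by case~(3). The main technical obstacle is case~(3): matching the three-layered upper-closure ${\uparrow}\overline{{\uparrow}\overline{{\uparrow}V}}$ of the well-separation axiom to the layered $\bar\uparrow\bar\downarrow$ structure of $W$ with the outer chain-closure, and especially handling the comparable configurations $h(x)<y$ or $y<h(x)$ where Lemma~\ref{l:neib} alone is insufficient, will require careful bookkeeping at every nested level and use of the full strength of well-separation.
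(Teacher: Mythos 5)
Your skeleton matches the paper's: reduce to the Key Lemma~\ref{l:super}, split according to the order relation between $y$ and $s:=h(x)$ inside the semilattice $\overline{h(X)}$, and exclude $x$ from the relevant chain-closure using Lemma~\ref{l:neib} together with the chain-closedness of closed monotone sets (Lemma~\ref{c:fid=>cc}). But there are three genuine gaps. First, in case (2) you treat only the configuration $y\not\le s$. When $y<s$, the point $s$ lies in ${\uparrow}V$ for \emph{every} neighborhood $V$ of $y$, so no containment of the form $\overleftrightarrow{h^{-1}(V)}\subset\overline{{\uparrow}h^{-1}(V)}$ can possibly exclude $x$; the hypothesis of case (2) is of no use here. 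The correct move, valid under $c$-completeness alone and needed in all three cases, is dual: since $s\not\le y$, Lemma~\ref{l:neib} gives ${\uparrow}V_s\cap{\downarrow}V_y=\emptyset$, the set ${\uparrow}V_s$ is open by Lemma~\ref{l:updown}(1), so $F=h^{-1}(Y\setminus{\uparrow}V_s)$ is a closed \emph{lower} set containing $h^{-1}(V_y)$ and missing $x$, and $F$ is chain-closed by Lemma~\ref{c:fid=>cc}. Second, your reduction of ``$Y$ down-open'' to case (3) fails: the condition $\overleftrightarrow{S}\subset{\uparrow}\overline{{\uparrow}S}$ involves chain-closures and topological closures computed in $X$, and down-openness of $Y$ gives no control over these ($\overline{{\uparrow}S}$ need not be an upper set of $X$, so ${\uparrow}\overline{{\uparrow}S}$ need not be closed and Lemma~\ref{c:fid=>cc} does not apply to it). What down-openness of $Y$ actually yields is that ${\downarrow}V_s$ is open in $Y$, hence $h^{-1}({\downarrow}V_s)$ is an open lower set of $X$ whose complement is a closed upper set containing $h^{-1}(V_y)$ and missing $x$ --- again chain-closed by $c$-completeness. (Your reduction of ``$X$ down-open'' to case (2), via the observation that $\overline{{\uparrow}V}$ is then a closed upper set, is fine.)

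Third, and most seriously, in case (3) the only hard configuration, $s<y$, is precisely the one you defer to ``careful bookkeeping''; this is where essentially all of the content of the theorem lies, and it is not a calibration exercise. One must: shrink $V_y$ so that $s\notin{\uparrow}\overline{{\uparrow}\overline{{\uparrow}V_y}}$; choose the $U_n$ to be \emph{upper} open sets with $U_nU_n\subset U_{n-1}\cap{\uparrow}V_y$, so that $F=\bigcap_{n\in\w}{\uparrow}\bar\downarrow h^{-1}(U_n)$ is an upper subsemilattice (via Lemma~\ref{l:cccl}) containing $\bigcap_{n\in\w}\bar\uparrow\bar\downarrow h^{-1}(U_n)$; apply hypothesis (3) to $F$ to get $\overleftrightarrow{F}\subset{\uparrow}\overline{F}$; and then prove the inclusion $\bar\downarrow h^{-1}({\uparrow}V_y)\subset{\uparrow}\overline{h^{-1}({\uparrow}V_y)}$ by invoking hypothesis (3) a \emph{second} time, applied to the subsemilattices ${\uparrow}C$ for countable chains $C\subset h^{-1}({\uparrow}V_y)$. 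Only after these steps does one get $h({\uparrow}\overline{F})\subset{\uparrow}\overline{{\uparrow}\overline{{\uparrow}V_y}}\subset Y\setminus\{s\}$ and hence $x\notin{\uparrow}\overline{F}\supset\overleftrightarrow{F}$. None of this appears in your sketch, so case (3) remains unproved.
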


\begin{proof} Observing that the closure $\bar S$ of the semilattice $S$ in the Hausdorff topological semigroup $Y$ is a semilattice, we can assume that $Y=\bar S$ is a topological semilattice.

Being $c$-complete, the semilattice $X$ is chain-complete. By Lemma~\ref{l:sup-clo}, the topological semilattice $X$ is  chain-continuous.
The closedness of the set $S:=h(X)$ in $Y$ will follow from Lemma~\ref{l:super} as soon as we check the condition (2) of this lemma.

Given any points $y\in\bar S\setminus S\subset Y$ and $x\in X$ we need to find a sequence $(U_n)_{n\in\w}$ of neighborhoods of $y$ in $Y$ such that $x\notin \overleftrightarrow{\bigcap_{n\in\w}\bar\uparrow\bar\downarrow h^{-1}(U_n)}$. Depending on the relation between the points $y$ and $s:=h(x)$, we consider two cases.
\smallskip

If $s\not\le y$, then by Lemma~\ref{l:neib}, the points $s$ and $y$ have neighborhoods $V_s$ and $V_y$ in $Y$ such that ${\uparrow}V_s\cap{\downarrow}V_y=\emptyset$. By Lemma~\ref{l:updown}(1), the upper set ${\uparrow}V_s$ is open in $Y$.
Then $F:=h^{-1}(Y\setminus{\uparrow}V_s)$ is a closed lower set in $X$ that contains $h^{-1}(V_y)$ but does not contain the point $x\in h^{-1}(V_s)$. By Lemma~\ref{c:fid=>cc}, the closed lower set $F$ is chain-closed in $X$. Then for the neighborhoods $U_n:=V_y$, $n\in\w$ we get $$\overleftrightarrow{\bigcap_{n\in\w}\bar\uparrow\bar\downarrow h^{-1}(U_{n})}=\overleftrightarrow{\bar\uparrow\bar\downarrow h^{-1}(V_y)}=\overleftrightarrow{h^{-1}(V_y)}\subset \overleftrightarrow{F}=F\subset X\setminus \{x\}.$$
\smallskip

The case $s\le y$ is more complicated. In this case $y\not\le s$ and we can apply Lemma~\ref{l:neib} to find two open neighborhoods $V_s,V_y\subset Y$ of $s,y$ such that  ${\downarrow}V_s\cap{\uparrow} V_y=\emptyset$. By Lemma~\ref{l:updown}(1), the upper set ${\uparrow}V_y$ is open in $Y$. The continuity of the homomorphism $h$ implies that $h^{-1}({\uparrow}V_y)$ is an open upper set in $X$.

If $X$ is down-open, then the lower set ${\downarrow}h^{-1}(V_s)$ is open in $X$ and $X\setminus{\downarrow}h^{-1}(V_s)$ is an upper closed set that contains $h^{-1}(V_y)$ and is disjoint with $h^{-1}(V_s)$.

If $Y$ is down-open, then the lower set ${\downarrow}V_s$ is open in $Y$ and the preimage $h^{-1}({\downarrow}V_s)$ is an open lower set in $X$. Then its complement $X\setminus h^{-1}({\downarrow}V_s)$ is an upper closed set that contains $h^{-1}(V_y)$ and is disjoint with $h^{-1}(V_s)$.

In both cases we have found an upper closed set $F\subset X$, containing the set $h^{-1}(V_y)$ and disjoint with the set $h^{-1}(V_s)\ni x$. By the $c$-completeness of $X$, the upper closed set $F$ is chain-closed. For every $n\in\w$ put $U_n:=V_y$ and observe that
 $$\overleftrightarrow{\bigcap_{n\in\w}\bar\uparrow\bar\downarrow h^{-1}(U_n)}\subset \overleftrightarrow{\bar\uparrow\bar\downarrow h^{-1}(V_y)}= \overleftrightarrow{h^{-1}(V_y)}\subset  \overleftrightarrow{F}=F\subset X\setminus\{x\}.$$
\smallskip

If the condition (2) of the theorem holds, then we put $U_n:={\uparrow}V_y$ for all $n\in\w$ and observe that
$$\overleftrightarrow{\bigcap_{n\in\w}\bar\uparrow\bar\downarrow h^{-1}(U_n)}=\overleftrightarrow{\bar\uparrow\bar\downarrow h^{-1}({\uparrow}V_y)}=\overleftrightarrow{h^{-1}({\uparrow}V_y)}
\subset\overline{h^{-1}({\uparrow}V_y)}\subset X\setminus h^{-1}(V_s)\subset X\setminus \{x\}.$$

Finally, assume that the condition (3) holds. In this case we can replace $V_y$ by a smaller neighborhood and additionally assume that $s\notin {\uparrow}\overline{{\uparrow}\overline{{\uparrow} V_y}}$.
By the continuity of the semilattice operation at $y$, there exists a decreasing sequence $(U_n)_{n\in\w}$ of open neighborhoods of $y$ such that $U_n\subset U_nU_n\subset U_{n-1}\cap {\uparrow} V_y$ for $n\in\w$.
Replacing each neighborhood $U_n$ by ${\uparrow}U_n$, we can assume that $U_n={\uparrow}U_n$ is an upper set. Then $W_n:=h^{-1}(U_n)$ is an open upper set in $X$ such that $W_nW_n\subset W_{n-1}\cap h^{-1}({\uparrow}V_y)$ for all $n\in\w$. Lemma~\ref{l:cccl} guarantees that $(\bar\downarrow W_n)\cdot(\bar\downarrow W_n)\subset\bar\downarrow (W_nW_n)\subset \bar\downarrow W_{n-1}$ and hence $({\uparrow}\bar\downarrow W_n)\cdot({\uparrow}\bar\downarrow W_n)\subset{\uparrow}\bar\downarrow W_{n-1}$, which implies that $F=\bigcap_{n\in\w}{\uparrow}\bar\downarrow W_n$ is a subsemilattice of $X$ with $F={\uparrow}F$. By Lemma~\ref{l:w}, the semilattice
$\bigcap_{n\in\w}\bar\uparrow\bar\downarrow W_n\subset \bigcap_{n\in\w}{\uparrow}\bar\downarrow W_n=F$ is not empty.
By our assumption, $\overleftrightarrow{F}\subset{\uparrow}\overline{{\uparrow}F}=
{\uparrow}\overline{F}$.  Then
$$\overleftrightarrow{\bigcap_{n\in\w}\bar\uparrow\bar\downarrow h^{-1}(U_n)}\subset \overleftrightarrow{\bigcap_{n\in\w}{\uparrow}\bar\downarrow W_n}\subset\overleftrightarrow{F}\subset{\uparrow}\overline{F}.$$It remains to show that $x\notin {\uparrow}\overline{F}$. Observe that $F\subset {\uparrow}\bar\downarrow h^{-1}({\uparrow}V_y)$.
We claim that $\bar\downarrow h^{-1}({\uparrow}V_y)\subset{\uparrow}\overline{h^{-1}({\uparrow}V_y)}$.
Indeed, for any $a\in \bar\downarrow h^{-1}({\uparrow}V_y)$, we can find a countable chain $C\subset h^{-1}({\uparrow}V_y)$ with $a=\inf C$ and observe that $C$ is a semilattice such that  ${\uparrow}C\subset {\uparrow}h^{-1}({\uparrow}V_y)=h^{-1}({\uparrow}V_y)$. By our assumption, $\overleftrightarrow{C}\subset{\uparrow}\overline{{\uparrow}C}$ and hence $$a=\inf C\in\overleftrightarrow{C}\subset{\uparrow}\overline{{\uparrow}C}\subset {\uparrow}\overline{h^{-1}({\uparrow}V_y)}.$$

Therefore $\bar \downarrow h^{-1}({\uparrow}V_y)\subset {\uparrow}\overline{h^{-1}({\uparrow}V_y)}$ and $F\subset{\uparrow}\bar{\downarrow}h^{-1}({\uparrow}V_y)\subset {\uparrow}{\uparrow}\overline{h^{-1}({\uparrow}V_y)}=
{\uparrow}\overline{h^{-1}({\uparrow}V_y)}$. By the continuity and monotonicity of $h$,
$$h(F)\subset h({\uparrow}\overline{h^{-1}({\uparrow} V_y)})\subset {\uparrow}h(\overline{h^{-1}({\uparrow} V_y)}\subset {\uparrow}\overline{h(h^{-1}({\uparrow} V_y))}\subset {\uparrow}\overline{{\uparrow}V_y}.$$ Then
$h(\overline{F})\subset\overline{h(F)}\subset \overline{{\uparrow}\overline{{\uparrow}V_y}}$ and
$h({\uparrow}\overline{F})\subset{\uparrow} \overline{{\uparrow}\overline{{\uparrow}V_y}}\subset Y\setminus\{s\}$, which implies the desired non-inclusion $x\notin{\uparrow}\overline{F}$.
\smallskip

Now it is legal to apply Lemma~\ref{l:super} to conclude that the set $S$ is closed in $Y$.
\end{proof}

\section{Proof of Theorem~\ref{main}}\label{s:proof}

In this section we shall prove a more general version of Theorem~\ref{main}.

Let $h:X\to Y$ be a continuous homomorphism from a $c$-complete topological semilattice $X$ to a Hausdorff topological semigroup $Y$. We shall prove that the image $S:=h(X)$ is closed in $Y$ if one of the following conditions is satisfied:
\begin{enumerate}
\item $X$ is $k$-complete;
\item $X$ is $s$-complete and $Y$ is well-separated;
\item $X$ or $Y$ is down-open;
\item $X$ or $Y$ is a topological lattice.
\end{enumerate}
\smallskip

1. Assume that $X$ is $k$-complete. Then for each closed subset $F\subset X$ and each non-empty chain $C\subset X$ we get $\{\inf C,\sup C\}\subset\bar C\subset \bar F=F$, which means that $F$ is chain-closed.
In particular, for any open subset $U\subset X$ the closed set $\overline{{\uparrow}U}$ is chain-closed and hence contains the chain-closure $\overleftrightarrow{U}$ of the set $U$. Now we see that the condition (2) of Theorem~\ref{t:main} is satisfied and hence $h(X)$ is closed in $Y$.
\smallskip

2. Assume that $X$ is $s$-complete and $Y$ is well-separated. Then for any non-empty subsemilattice $S\subset X$ we get $\inf S\in\overline{{\uparrow}S}$. By the $c$-completeness of $X$, the closed upper set ${\uparrow}\inf S$ is chain-closed and hence $\overleftrightarrow{S}\subset {\uparrow}\inf S={\uparrow}\overline{S}$. Now we can apply Theorem~\ref{t:main}(3) and conclude that $h(X)$ is closed in $Y$.
\smallskip

3. If $X$ or $Y$ is down-open, then $h(X)$ is closed in $Y$ by Theorem~\ref{t:main}(1)
\smallskip

4. If $X$ or $Y$ is a topological lattice, then $X$ or $Y$ is down-open according to Corollary~\ref{c:l=>do}. By the preceding item $h(X)$ is closed in $Y$.

\section{$\hC$-Closed topological semilattices}\label{s:diagram}

In this section we prove the implications drawn in the diagram at the end of the introduction. These implications can be derived from Corollary~\ref{c:l=>do} (saying that each topological lattice is down-open), Lemma~\ref{l:well} (establishing the embeddings of the classes $\TsLo\subset\TsLw\subset \TsL$), Theorem~\ref{t:BB} (implying that a Hausdorff topological semilattice is $k$-complete if and only if it has compact maximal chains) and the following two propositions.

\begin{proposition}\label{p:diag} For any Hausdorff topological semilattice $X$ the following statements hold.
\begin{enumerate}
\item If $X$ is $k$-complete, then $X$ is $s$-complete.
\item If $X$ is $s$-complete, then $X$ is $c$-complete.
\item If $X$ has $c$-complete maximal chains, then $X$ is $c$-complete.
\end{enumerate}
\end{proposition}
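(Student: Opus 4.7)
The plan is to handle the three implications in order, all powered by Zorn's lemma and maximal chains, with an additional topological closure trick for (1) and a ``two-maximal-chains'' trick for the hard half of (3).

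For (1), the chain half is immediate from $k$-completeness since $\sup C\in\overline{C}\subseteq\overline{{\downarrow}C}$. For the subsemilattice half, I pass from $S$ to the closure $\overline{S}$, which is a closed subsemilattice (closures of subsemilattices in a topological semilattice are subsemilattices) and itself $k$-complete (any chain in $\overline{S}$ is a chain in $X$ whose inf and sup land in $\overline{\overline{S}}=\overline{S}$). I pick a maximal chain $M\subseteq\overline{S}$. By $k$-completeness, $\inf M\in\overline{M}\subseteq\overline{S}$, and since $M\cup\{\inf M\}$ would otherwise be a strictly longer chain in $\overline{S}$, maximality forces $\inf M\in M$. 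A second maximality argument shows that for every $t\in\overline{S}$ one has $t\cdot\inf M=\inf M$ (otherwise $M\cup\{t\cdot\inf M\}$ properly extends $M$), so $\inf M$ is a lower bound of $\overline{S}$, hence of $S$. Any lower bound $x$ of $S$ in $X$ satisfies $x\le\inf M$ by picking a net $(s_\alpha)\subseteq S$ with $s_\alpha\to\inf M\in\overline{S}$ and passing $x\le s_\alpha$ to the limit via closedness of $\le$ in the Hausdorff pospace $X$. Therefore $\inf S=\inf M\in\overline{S}\subseteq\overline{{\uparrow}S}$.

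For (2), if $F\subseteq X$ is a closed upper set and $C\subseteq F$ is a non-empty chain, then $C$ is automatically a subsemilattice, so $s$-completeness gives $\inf C\in\overline{{\uparrow}C}$. Since $C\subseteq F$ and $F$ is upper, ${\uparrow}C\subseteq F$, and $F$ being closed then gives $\overline{{\uparrow}C}\subseteq F$, placing $\inf C$ in $F$. The sup condition $\sup C\in\overline{{\downarrow}C}$ is the chain condition in $s$-completeness.

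For (3), given closed upper $F\subseteq X$ and a non-empty chain $C\subseteq F$, I extend $C$ to a maximal chain $M$ of $X$ by Zorn. The hypothesis grants that $M$ is $c$-complete, and applied to the closed upper set $F\cap M$ of $M$ it yields $\inf_M C\in F\cap M\subseteq F$ together with $\sup_M C$ in the closure (within $M$) of ${\downarrow}C\cap M$, which is contained in $\overline{{\downarrow}C}$. It remains to see these are the infimum and supremum in $X$ itself. For the supremum, any upper bound $y\in X$ of $C$ yields the upper bound $y\cdot\sup_M C$ of $C$ that lies $\le\sup_M C$; the set $\{y\cdot\sup_M C\}\cup M$ is a chain (each $m\in M$ either lies below some $c\in C$, hence below $y\cdot\sup_M C$, or is an upper bound of $C$ in $M$, hence dominates $\sup_M C$), so maximality of $M$ forces $y\cdot\sup_M C\in M$ and then $y\cdot\sup_M C\ge\sup_M C$, whence $y\cdot\sup_M C=\sup_M C$ and $y\ge\sup_M C$. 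The analogous chain-extension argument for the infimum rules out lower bounds of $C$ strictly \emph{greater} than $\inf_M C$, but cannot by itself handle a lower bound $y$ that is \emph{incomparable} with $\inf_M C$, because no dual of the $y\cdot\sup_M C$ trick is available in a semilattice (which has no joins). This incomparability is the main obstacle, and it is overcome by extending $\{y\}\cup C$ to a maximal chain $M_y$ through $y$, which is again $c$-complete, and applying $c$-completeness of $M_y$ to the closed upper set $({\uparrow}\inf_M C)\cap M_y$ (which contains $C$ since $\inf_M C\le c$ for all $c\in C$); this yields $\inf_{M_y}C\ge\inf_M C$. The chain-extension argument inside $M$ applied to the lower bound $\inf_{M_y}C$ then upgrades this to $\inf_{M_y}C=\inf_M C$, and since $y\le\inf_{M_y}C$ this contradicts the assumed incomparability. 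Hence $\inf_X C=\inf_M C\in F$, completing the proof that $X$ is $c$-complete.
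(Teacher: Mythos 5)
Your proof is correct. Parts (1) and (2) follow essentially the paper's own route: for (1) the paper runs the same maximal-chain argument inside $\overline{{\uparrow}S}$ rather than $\overline{S}$ and gets the maximality of the lower bound from the closedness of ${\uparrow}b$ instead of your net argument, and (2) is the same observation that ${\uparrow}C\subset F$ forces $\overline{{\uparrow}C}\subset F$. The real divergence is in (3). The paper also extends $C$ to a maximal chain $M$, but it invokes Ward's theorem (a linear pospace is a topological lattice, Proposition~\ref{p:Ward}) together with Lemma~\ref{l:updown}(3) to show that $\overline{M\cap{\uparrow}C}$ is a closed \emph{upper} set in $M$; applying $c$-completeness of $M$ to that set places $\inf_M C$ in $\overline{M\cap{\uparrow}C}$, and then for any lower bound $b$ of $C$ in $X$ the inclusion $M\cap{\uparrow}C\subset{\uparrow}b$ and the closedness of ${\uparrow}b$ give $b\le\inf_M C$ in one stroke (the supremum is handled dually via ${\downarrow}b$). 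You instead avoid the topological-lattice machinery entirely: your $y\cdot\sup_M C$ multiplication trick plus maximality of $M$ settles the supremum, and the second maximal chain $M_y$ through a candidate lower bound $y$, combined with $c$-completeness of $M_y$ applied to $({\uparrow}\inf_M C)\cap M_y$ and the chain-extension argument in $M$, settles the infimum. Your route is more elementary in that it never needs Ward's theorem or the fact that closures of upper sets in a topological lattice are upper, at the cost of a second application of Zorn's lemma and of $c$-completeness per lower bound; the paper's route is a single uniform argument once the upper-set property of $\overline{M\cap{\uparrow}C}$ is established. Both are complete proofs.
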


\begin{proof}
1. Suppose $X$ is $k$-complete. To show that $X$ is $s$-complete, fix any non-empty subsemilattice $S\subset X$ and observe that  $\overline{{\uparrow}S}$ is a closed subsemilattice in $X$. Using Zorn's Lemma, choose any maximal chain $M$ in $\overline{{\uparrow}S}$. By the $k$-completeness of $X$ the chain $M$ has $\inf M\in\overline{M}\subset \overline{{\uparrow} S}$. We claim that
$a:=\inf M$ is a lower bound for $S$. Given any element $s\in S$, observe that $as\le a$ and $as\in\overline{{\uparrow}S}$. By the maximality of the chain $M$, $as\in M$ and hence $a\le as$, which means that $a=as\le s$ and hence $a$ is a lower bound for the semilattice $S$. To see that $a$ is the largest lower bound for $S$, take any lower bound $b$ for $S$ and observe that $S\subset{\uparrow}b$ implies $a\in \overline{{\uparrow}S}\subset{\uparrow}b$ and hence $b\le a$. So, $\inf S=a\in \overline{{\uparrow}}S$.

On the other hand, the $k$-completeness of $X$ guarantees that each non-empty chain $C\subset X$ has $\sup C\in \overline{C}\subset \overline{{\downarrow} C}$. Hence $X$ is a $s$-complete semilattice.
\smallskip

2. Suppose that $X$ is $s$-complete semilattice. Let $F\subset X$ is an arbitrary closed upper set and $C$ be an arbitrary chain in $F$. Observe that the upper set ${\uparrow} C\subset F$ and its closure $\overline{{\uparrow}C}\subset \overline{F}=F$ are subsemilattices in  $X$. Since $X$ is $s$-complete, the semilattice ${\uparrow}C$ has $\inf {\uparrow}C\in \overline{{\uparrow}C}\subset F$. It is clear that $\inf C=\inf {\uparrow} C\in F$.

Since $X$ is $s$-complete semilattice each non-empty chain $C\subset X$ has $\sup C\in\overline{{\downarrow}C}$. Hence $X$ is $c$-complete semilattice.
\smallskip

3. Let $X$ be a semilattice with $c$-complete maximal chains. Let $F\subset X$ be an arbitrary closed upper set and $C$ be an arbitrary chain in $F$. Using Zorn's Lemma, extend the chain $C$ to a maximal chain $M\subset X$. Since the closure of a chain in a semilattice is a chain, the maximal chain $M$ is closed in $X$.

Consider the upper set $U:=M\cap {\uparrow}C\subset M\cap F$ in $M$ and its closure $\overline{U}\subset \overline{M\cap F}\subset \overline{M}\cap\overline{F}=M\cap F$. By Proposition~\ref{p:Ward}, the linear pospace $M$ is a topological lattice. Applying Lemma~\ref{l:updown}(3) (to the continuous semilattice operation $M\times M\to M$, $(x,y)\mapsto \sup\{x,y\}$), we conclude that $\overline{U}$ is an upper set in $M$.

The $c$-completeness of the maximal chain $M$ ensures that the chain $C$ has the greatest lower bound $\inf_M C\in \overline{U}\subset F$ in $M$. We claim that $\inf_M C$ is the greatest lower bound of $C$ in $X$. Given any lower bound $b\in X$ for $C$, we conclude that $U=M\cap {\uparrow}C\subset{\uparrow}b$ and hence $\inf_M C\in \overline{U}\subset{\uparrow}b$. Then $b\le \inf_M C$, which means that $\inf C=\inf_M C\in F$.

On the other hand, the $c$-completeness of $M$ ensures that the chain $C\subset M$ has the least upper bound $\sup_M C\in\overline{M\cap{\downarrow}C}$ in $M$. We claim that $\sup_M C$ is the least upper bound for $C$ in $X$.
Given any upper bound $b\in X$ for $C$, observe that $C\subset{\downarrow}b$ and hence $\sup_M C\in\overline{M\cap{\downarrow}C}\subset {\downarrow}b$, which means that $\sup_M C\le b$ and $\sup_M C$ is the least upper bound for $C$ in $X$. Then $\sup C=\sup_M C\in\overline{M\cap{\downarrow}C}\subset\overline{{\downarrow}C}$, which completes the proof of the $c$-completeness of the topological semilattice $X$.
\end{proof}

Proposition~\ref{p:diag} and the general version of the Theorem~\ref{main}, proved in Section~\ref{s:proof}, imply the following corollary.

\begin{corollary}\label{p:diag2} For any Hausdorff topological semilattice $X$ the following statements hold.
\begin{enumerate}
\item If $X$ is $k$-complete, then $X$ is ${\mathsf h}{:}\!\TsL$-closed.
\item If $X$ is $s$-complete, then $X$ is ${\mathsf h}{:}\!\TsLw$-closed.
\item If $X$ is $c$-complete, then $X$ is ${\mathsf h}{:}\!\TsLo$-closed.
\item If $X$ is $c$-complete and down-open, then $X$ is ${\mathsf h}{:}\!\TsL$-closed.
\end{enumerate}
\end{corollary}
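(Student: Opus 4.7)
The plan is to derive each of the four items as an immediate consequence of the strengthened version of the Main Theorem proved in Section~\ref{s:proof} (which explicitly records four sufficient conditions: (i) $X$ is $k$-complete; (ii) $X$ is $s$-complete and $Y$ is well-separated; (iii) $X$ or $Y$ is down-open; (iv) $X$ or $Y$ is a topological lattice, the last being subsumed in (iii) via Corollary~\ref{c:l=>do}). The key preparatory input is Proposition~\ref{p:diag}, which gives the chain of implications $k$-complete $\Rightarrow$ $s$-complete $\Rightarrow$ $c$-complete, so that in every item of the corollary the ambient $c$-completeness hypothesis of Section~\ref{s:proof} is automatically available.

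Given this, each item reduces to picking the correct clause and checking that the target semilattice $Y$ supplies the extra assumption demanded by that clause. For item (1), fix any continuous homomorphism $h:X\to Y$ with $Y\in\TsL$; since $X$ is $k$-complete, clause (i) applies directly and $h(X)$ is closed in $Y$. For item (2), fix $h:X\to Y$ with $Y\in\TsLw$; then $X$ is $s$-complete (hence $c$-complete by Proposition~\ref{p:diag}(2)) and $Y$ is by definition well-separated, so clause (ii) yields the closedness of $h(X)$. For item (3), take $h:X\to Y$ with $Y\in\TsLo$; now $X$ is $c$-complete and $Y$ is down-open, so clause (iii) applies. For item (4), take $h:X\to Y$ with $Y\in\TsL$; here $X$ itself is down-open and $c$-complete, so clause (iii) applies once more.

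There is no substantial obstacle at this stage: the analytic content — the Key Lemma, its use in Theorem~\ref{t:main}, and the bookkeeping that upgrades Theorem~\ref{t:main} to the full Main Theorem in Section~\ref{s:proof} — has already been absorbed. The only minor point worth noting is that in clauses such as ``$X$ is $\hTS L_o$-closed'' one must be willing to read the definition so that what matters is the class in which the codomain $Y$ is allowed to vary; the Hausdorff topological semilattice $X$ itself is furnished by the standing assumption of the corollary, so the statements are well-posed regardless of whether $X$ happens to be down-open or well-separated.
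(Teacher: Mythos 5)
Your proposal is correct and follows exactly the paper's route: the paper derives the corollary in one line from Proposition~\ref{p:diag} together with the general version of the Main Theorem proved in Section~\ref{s:proof}, which is precisely the reduction you carry out clause by clause. Your remark about reading ``$\hC$-closed'' as constraining only the codomain class is a fair and correct reading of how the paper itself uses the definition.
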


\section{Some Comments and Open Problems}\label{s:last}

In this section we shall discuss some results and open problems related to Main Theorem~\ref{main}.

Theorems~\ref{t:Stepp}, \ref{t:BB}, \ref{t:GR}, \ref{main}  suggest the following intriguing open problems.

\begin{problem}\label{prob} Is any (regular) absolutely $H$-closed topological semilattice $X$ $c$-complete?
\end{problem}

\begin{problem} Is each $s$-complete (regular) Hausdorff topological semilattice $X$ absolutely $H$-closed?
\end{problem}

\begin{problem} Is a Hausdorff topological semilattice $X$ (absolutely) $H$-closed if all maximal chains in $X$ are $c$-complete?
\end{problem}

\begin{problem} Is a Hausdorff topological semilattice $X$ $s$-complete if all maximal chain in $X$ are $s$-complete?
\end{problem}

\begin{problem} Is each $c$-complete Hausdorff topological semilattice $s$-complete?
\end{problem}

\begin{problem} Is each absolutely $H$-closed topological semilattice chain-complete?
\end{problem}

The following example shows that $H$-closed topological semilattices need not be $c$-complete.

\begin{example}\label{Bard} There exists a topological semilattice $X$ such that
\begin{enumerate}
\item $X$ is metrizable, countable, and locally compact;
\item $X$ is $H$-closed;
\item $X$ is not chain-complete;
\item $X$ contains a closed upper set $U$, which is not chain-closed in $X$,
so $X$ is not $c$-complete;
\item there exists an injective continuous homomorphism $h:X\to Z$ to a compact Hausdorff topological semilattice $Z$ whose image $h(X)$ is not closed in $Z$.
\end{enumerate}
\end{example}

\begin{proof}
Let $\bar\IZ=\{-\infty\}\cup\IZ\cup\{+\infty\}$ be the set of integer numbers with attached elements $-\infty,+\infty$ such that $-\infty<z<+\infty$ for all $z\in\IZ$. We endow $\bar\IZ$ with the semilattice operation of minimum. Let $\{0,1\}$ be the discrete two-element semilattice with the semilattice operation of minimum.
In the product semilattice $\bar\IZ\times\{0,1\}$, consider the subsemilattice $$X:=(\bar\IZ\times\{0\})\cup(\IZ\times\{1\}).$$
Endow $X$ with the topology
$$
\begin{aligned}
\tau=\;&\{U\subset X:(-\infty,0)\in U\Ra(\exists n\in\IZ\;\forall m\le n\;\; (-2m,0)\in U)\}\;\cup\\
 &\cup\{U\subset X:(+\infty,0)\in U\Ra(\exists n\in\IZ\;\forall m\ge n\;\; (2m,0)\in U)\}.
\end{aligned}
$$

By \cite[Theorem~4]{Bardyla-Gutik-2012}, the topological semilattice $X$ is $H$-closed. On the other hand, the upper closed set $\IZ\times\{1\}$ is not chain-closed in $X$ as $(-\infty,0)=\inf (\IZ\times\{1\})\notin\IZ\times\{1\}$. So, $X$ is not $c$-complete.
Also the set $\IZ\times\{1\}$ has no upper bound in $X$, so the semilattice $X$ is not chain-complete.

Next, endow $\bar\IZ$ with the compact metrizable topology $$
\begin{aligned}
\tau_k=\;&\{U\subset X:(-\infty,0)\in U\Ra(\exists n\in\IZ\;\forall m\le n \;\;(-m,0)\in U)\}\;\cap\\
 &\cap\{U\subset X:(+\infty,0)\in U\Ra(\exists n\in\IZ\;\forall m\ge n\;\; (m,0)\in U)\}.
\end{aligned}
$$ Then the identity map $\id:X\rightarrow \bar\IZ\times\{0,1\}$ is an injective continuous homomorphism whose image $\id(X)$ is not closed in $\bar\IZ\times\{0,1\}$.
\end{proof}

In \cite{Yokoyama2013} Yokoyama asked the question: {\it Is each $H$-closed pospace chain-complete?} The following example gives a negative answer to this question.

\begin{example}
Let $\mathbb I:=[0,1]$ be the unit interval endowed with the usual topology $\tau$ and let $L=\{1/n:n\in\mathbb N\}$. Let $\tau_{1}$ be the topology on $\mathbb I$,  generated by the base $\{V\setminus L:V\in\tau\}$. On the space $X$ consider the partial order $\preceq$ in which $x\preceq y$ iff either $x=y$ or $x,y\in L$ and $x\le y$. By \cite[3.12.5]{Engelking1989}, $X=([0,1],\tau_{1})$ is an $H$-closed topological space, which implies that $(X,\preceq)$ is an absolutely $H$-closed pospace. On the other hand, $L$ is a maximal chain in $(X,\leq)$ without lower bound in $X$.
\end{example}

\newpage

\end{document}